\renewcommand{\geq}{\geqslant}
\renewcommand{\leq}{\leqslant}
\newtheorem{theorem}{Theorem}
\newtheorem{rem}[theorem]{Remark}
\newtheorem{cor}[theorem]{Corollary}
\newtheorem{proposition}[theorem]{Proposition}
\newtheorem{lemma}[theorem]{Lemma}
\newtheorem{question}{Question}
\definecolor{darkgreen}{rgb}{0,0.6,0.1}
\definecolor{MyDarkBlue}{rgb}{0,0.08,0.50}
\definecolor{BrickRed}{rgb}{0.65,0.08,0}
\title[Some aspects of fluctuations of random walks on $\mathbb R$]{Some aspects of fluctuations of random walks on $\mathbb R$ and applications to random walks on $\mathbb R^{+}$\\with non-elastic reflection at $0$}
\author[Rim Essifi]{Rim Essifi} \address{Universit\'e de Tours, F\'ed\'eration Denis Poisson, Laboratoire de Math\'ematiques et Physique Th\'eorique, Parc de Grandmont, 37200 Tours, France} \email{Rim.Essifi@lmpt.univ-tours.fr}
\author[Marc Peign\'e]{Marc Peign\'e} \address{Universit\'e de Tours, F\'ed\'eration Denis Poisson, Laboratoire de Math\'ematiques et Physique Th\'eorique, Parc de Grandmont, 37200 Tours, France} \email{Marc.Peigne@lmpt.univ-tours.fr}
\author[Kilian Raschel]{Kilian Raschel} \address{CNRS and F\'ed\'eration Denis Poisson, Laboratoire de Math\'ematiques et Physique Th\'eorique, Parc de Grandmont, 37200 Tours, France} \email{Kilian.Raschel@lmpt.univ-tours.fr}
\keywords{Random walk on $\mathbb R$; Wiener-Hopf factorization; Fluctuations; Hitting times}
\subjclass{Primary 60F05, 60G50; Secondary 31C05}
\date{\today}
\begin{document}

\begin{abstract}
In this article we refine well-known results concerning the fluctuations of one-dimensional random walks. More precisely, if $(S_n)_{n \geq 0}$ is a random walk starting from $0$ and $r\geq 0$, we obtain the precise asymptotic behavior as $n\to\infty$ of $\mathbb P[\tau^{>r}=n, S_n\in K]$ and $\mathbb P[\tau^{>r}>n, S_n\in K]$, where $\tau^{>r}$ is the first time that the random walk reaches the set $]r,\infty[$, and $K$ is a compact set. Our assumptions on the jumps of the random walks are optimal. Our results give an answer to a question of Lalley stated in \cite{L}, and are applied to obtain the asymptotic behavior of the return probabilities for random walks on $\mathbb R^+$ with non-elastic reflection at $0$.
\end{abstract}

\maketitle

\section{Introduction}
\label{sec:Introduction}
\setcounter{equation}{0}
\subsection*{General context}
An essential aspect of fluctuation theory of discrete time random walks is the study of the two-dimensional renewal process formed by the successive maxima (or minima) of the random walk $(S_n)_{n \geq 0}$ and the corresponding times; this process is called the %(strictly or weakly) 
ascending (or descending) ladder process. It has been studied by many people, with major contributions by Baxter \cite{B}, Spitzer \cite{S}, and others who introduced Wiener-Hopf techniques and established several fundamental identities that relate the distributions of the ascending and descending ladder processes to the law of the random walk.

Let $(S_n)_{n \geq 0}$ be a random walk defined on a probability space  $(\Omega, \mathcal T, \mathbb P)$ and starting from $0$; in other words, $S_0=0$ and $S_n=Y_1+\cdots +Y_n$   for $n\geq 1$, where $(Y_i)_{i \geq 1}$ is a sequence of independent and identically distributed (i.i.d.)\ random variables. The strict ascending ladder process $(T_n^{*+}, H_n)_{n \geq 0}$ is defined as follows:
\begin{equation}
\label{eq:definition_ald}
     T_0^{*+}=0,\quad T_{n+1}^{*+}=\inf\{k>T_{n}^{*+}:S_k>S_{T_{n}^{*+}}\},\quad \forall n \geq 0,
\end{equation}
and
\begin{equation*}
     H_n=S_{T_n^{*+}}, \quad \forall n \geq 0.
\end{equation*}     
There exists a large literature on this process, which typically focuses on so-called {local limit theorems}, and in particular on the behavior of the probabilities $\mathbb P[T_1^{*+}>n]$ and $\mathbb P[T_1^{*+}>n,H_1\in K]$, where $K\subset \mathbb R$ is some compact set. Roughly speaking, when the variables $(Y_i)_{i \geq 1}$ admit moments of order $2$ and are centered, one  has the asymptotic behavior, as $n\to\infty$,
\begin{equation*}
     \mathbb P[T_1^{*+}>n]= \frac{a}{\sqrt{n}}(1+o(1)),\qquad 
     \mathbb P[T_1^{*+}>n, H_1\in K]= \frac{b}{n^{3/2}}(1+o(1)), 
\end{equation*}
for some constants $a, b>0$ to be specified (see for instance \cite{LP1} and references therein). 

These estimations are of great interest in several domains: one may cite for example branching processes in random environment (see for instance \cite{GK,GLL,K}) and random walks on non-unimodular groups (see \cite{LP1,LP2}); they also play a crucial role in several other less linear contexts, as in the study of return probabilities for random walks with reflecting zone on a half-line \cite{L}. 

In \cite{L}, Lalley introduced for $r>0$ the waiting time 
\begin{equation*}
     \tau^{>r}= \inf\{n >0: S_n>r\}, 
\end{equation*}
see Figure \ref{fig:tau_a}, and first looked at the behavior, as $n\to\infty$, of the probability $\mathbb P[\tau^{>r}=n, S_n \in K]$, where $K$ is a compact set. 
    \unitlength=1cm
    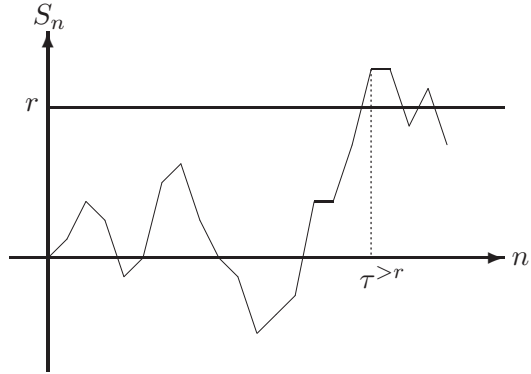
\begin{figure}[t]  
    \begin{center}
    \begin{tabular}{cccc}
    \hspace{-0.9cm}
    \begin{picture}(4.5,3)
    \thicklines
    \put(0,-1.5){{\vector(0,1){4.5}}}
    \put(-0.5,0){\vector(1,0){6.5}}
    \put(0,2){\line(1,0){6}}
    \put(-0.3,1.95){$r$}
    \put(6.1,-0.09){$n$}
    \put(-0.2,3.1){$S_n$}
    \thinlines
    \put(0,0){\line(1,1){0.25}}
    \put(0.25,0.25){\line(1,2){0.25}}
    \put(0.5,0.75){\line(1,-1){0.25}}
    \put(0.75,0.5){\line(1,-3){0.25}}
    \put(1,-0.25){\line(1,1){0.25}}
    \put(1.25,0){\line(1,4){0.25}}
    \put(1.5,1){\line(1,1){0.25}}
    \put(1.75,1.25){\line(1,-3){0.25}}
    \put(2,0.5){\line(1,-2){0.25}}
    \put(2.25,0){\line(1,-1){0.25}}
    \put(2.5,-0.25){\line(1,-3){0.25}}
    \put(2.75,-1){\line(1,1){0.25}}
    \put(3,-0.75){\line(1,1){0.25}}
    \put(3.25,-0.5){\line(1,5){0.25}}
    \put(3.5,0.75){\line(1,0){0.25}}
    \put(3.75,0.75){\line(1,3){0.25}}
    \put(4,1.5){\line(1,4){0.25}}
    \put(4.25,2.5){\line(1,0){0.25}}
    \put(4.5,2.5){\line(1,-3){0.25}}
    \put(4.75,1.75){\line(1,2){0.25}}
    \put(5,2.25){\line(1,-3){0.25}}
    %\linethickness{0.1mm}
    %\put(0,0){\dottedline{0.1}(5.25,1.5)(5.5,1)}
    \put(0,0){\dottedline{0.07}(4.25,2.5)(4.25,0)}
    \put(4.1,-0.4){$\tau^{>r}$}
    %\put(4.25,2.5){\line(0,-1){2.5}}
    \end{picture}
    \end{tabular}
    \end{center}
    \vspace{1cm}
    \caption{Definition of $\tau^{>r}$} 
    \label{fig:tau_a}
    \end{figure}
Under some strong conditions (namely, if the variables $(Y_i)_{i \geq 1}$ are lattice, bounded from above and centered), Lalley proved that 
\begin{equation}
\label{eq:asymptotic_Lalley}
     \mathbb P[\tau^{>r}=n, S_n\in K]= \frac{c}{n^{3/2}}(1+o(1)),\quad n\to\infty,
\end{equation}
for some non-explicit constant $c >0$, and wrote that ``[he] do[es] not know the minimal moment conditions necessary for [such an] estimate'' (see Equation (3.18) and below in \cite[page 590]{L}). His method is based on the Wiener-Hopf factorization and on a classical theorem of Darboux which, in this case, relates the asymptotic behavior of certain probabilities to the regularity of the underlying generating function in a neighborhood of its radius of convergence. In \cite{L}, the fact that the jumps $(Y_i)_{i \geq 1}$ are bounded from above is crucial since it allows the author to verify that the generating function of the jumps $(Y_i)_{i \geq 1}$ is meromorphic in a neighborhood of its disc of convergence, with a non-essential pole at $0$.

\subsection*{Aim and methods of this article}
In this article we obtain the asymptotic behavior of the probability in \eqref{eq:asymptotic_Lalley}, with besides an explicit formula for the constant $c$, under quite general hypotheses (Theorem \ref{MAIN=}). This in particular answers to Lalley's question. We will also obtain (Theorem \ref{MAIN>}) the asymptotic behavior of 
\begin{equation}
\label{eq:second_probability}
     \mathbb P[\tau^{>r}>n, S_n\in K],\quad n\to\infty.
\end{equation}      
To prove Theorems \ref{MAIN=} and \ref{MAIN>}, we shall adopt another strategy as that in \cite{L}, inspired by the works of Iglehart \cite{I}, Le Page and Peign\'e \cite{LP1} (Sections \ref{sec:FR} and \ref{sec:MR}). We will also propose an application of our main results to random walks on $\mathbb R^{+}$ with non-elastic reflection at $0$ (Section \ref{sec:RWabsorption}).  Finally, we shall emphasize the connections of our results with the ones of Denisov and Wachtel \cite{DW}, where quite a new approach is developed in any dimension, to find local limit theorems for random walks in cones (Section \ref{sec:DW}).

\section{First results}
\label{sec:FR}
\setcounter{equation}{0}
\subsection{Notations}
We consider here a  sequence $(Y_i)_{i \geq 1} $  of i.i.d.\ $\mathbb R$-valued random variables with law $\mu$, defined on a probability space $(\Omega, \mathcal T, \mathbb P)$.  For any $n \geq 1$, we set $\mathcal T_n= \sigma(Y_1, \ldots, Y_n)$. Let $(S_n)_{n \geq 0}$ be the corresponding random walk on $\mathbb R$  starting from $0$, i.e., $S_0=0$ and for $n\geq 1$, $S_n= Y_1+\cdots +Y_n$. In order to study  the fluctuations of $(S_n)_{n \geq0}$, we introduce for $r\in\mathbb R$ the random variables $\tau^{\geq r}$, $\tau^{>r}$, $\tau^{\leq r}$ and $\tau^{<r}$, defined by
\begin{align*}
     \tau^{\geq r}:=&\inf\{n \geq 1: S_n\geq r\},\\
     \tau^{>r}:=&\inf\{n \geq 1: S_n>r\},\\
     \tau^{\leq r}:=&\inf\{n \geq 1: S_n\leq r\},\\
     \tau^{<r}:=&\inf\{n \geq 1: S_n<r\}.
\end{align*}
Throughout we shall use the convention $\inf\{\emptyset\} = \infty$. The latter variables are stopping times with respect to the canonical filtration $(\mathcal T_n)_{n \geq 1}$. When $r=0$, in order to use standard notations, we shall rename $\tau^{\geq 0}$, $\tau^{>0}$, $\tau^{\leq 0}$ and $\tau^{<0}$ in $\tau^{+}$, $\tau^{*+}$, $\tau^{-}$ and $\tau^{*-}$, respectively. As\footnote{Here and throughout, we shall note $\mathbb R^+=[0,\infty[$, $\mathbb R^{*+}=]0,\infty[$, $\mathbb R^-=]-\infty,0]$ and $\mathbb R^{*-}=]-\infty,0[$.} $\mathbb R^{-}=\mathbb R\setminus \mathbb R^{*+}$ (resp.\ $\mathbb R^{+}=\mathbb R\setminus \mathbb R^{*-}$), there will be some duality connections between $\tau^{-}$ and $\tau^{*+}$ (resp.\ $\tau^{+}$ and $\tau^{*-}$).

We also introduce, as in \eqref{eq:definition_ald}, the sequence $(T^{*+}_n)_{n \geq 0}$ of successive ascending ladder epochs of the walk $(S_n)_{n \geq 0}$.
%they are defined by
%\begin{equation*}
%     T^{+}_0=0,\quad T^{+}_{n+1}=\inf \{k>T^{+}_{n} : S_k\geq S_{T^{+}_{n}}\},\quad \forall  n\geq 0.
%\end{equation*}      
One has $T^{*+}_1=\tau^{*+}$. Further, setting $\tau^{*+}_{n+1}:= T^{*+}_{n+1}-T^{*+}_{n}$ for any $n \geq 0$, one may write $T^{*+}_{n}= \tau^{*+}_1+\cdots+\tau^{*+}_n$, where $(\tau^{*+}_n)_{n \geq 1}$ is a sequence of i.i.d.\ random variables with the same law as $\tau^{*+}$.\footnote{Similarly, we may also consider the sequences $(T^{+}_n)_{n \geq 0}$, $(T^{-}_n)_{n \geq 0}$ and $(T^{*-}_n)_{n \geq 0}$ defined respectively by $T^{+}_0=T^{-}_0=T^{*-}_0=0$ and for $n\geq 0$, $T^{+}_{n+1}=\inf \{k>T^{+}_{n}: S_k\geq S_{T^{+}_{n}}\}$, $T^{-}_{n+1}=\inf \{k>T^{-}_{n}:S_k\leq S_{T^{-}_{n}}\}$ and $T^{*-}_{n+1}=\inf \{k>T^{*-}_{n}:S_k< S_{T^{*-}_{n}}\}$.}

\subsection{Hypotheses} Throughout this manuscript, we shall assume that the law $\mu$ satisfies one of the following moment conditions {\bf M}:
\begin{itemize}
     \item[{\bf M($k$):}]  {\it $\mathbb E[|Y_1|^k]<\infty$;}
     \item[{\bf M($\exp$):}] {\it $\mathbb E[\exp(\gamma Y_1)]<\infty$, for all $\gamma \in \mathbb R$;}
     \item[{\bf M($\exp^{-}$):}] {\it $\mathbb E[\exp(\gamma Y_1)]<\infty$, for all $\gamma \in \mathbb R^{-}$.}
\end{itemize}
We shall also often suppose 
\begin{enumerate}
     \item[{\bf C:}] {\it $\mathbb E[Y_1]=0$}.
\end{enumerate}
 
Under {\bf M($1$)} and {\bf C}, the variables $\tau^{+}$, $\tau^{*+}$, $\tau^{-}$ and $\tau^{*-}$ are $\mathbb P$-a.s.\ finite, see \cite{F2},\footnote{Notice that this property also holds for symmetric laws $\mu$ without any moment assumption.} and we denote by $\mu^{+} $ (resp.\ $\mu^{*+}, \mu^-, \mu^{*-}$) the law of the variable $S_{\tau^{+}}$ (resp.\ $S_{\tau^{*+}}, S_{\tau^{-}}$ and $S_{\tau^{*-}}$). %It is worth mentioning that this assumption is optimal (in fact, the constant $\sigma^2:= \mathbb E[Y_1^2]$ will appear explicitly in all the results hereafter).%\footnote{{\bf Ici, j'ai retire une phrase sur le fait que dans \cite{DW} il y avait une condition $2+\epsilon$. Si vous preferez on peut la remettre.}} We finally notice that the main statements in \cite{DW} are given under the stronger assumption of existence of moments of order $2+\epsilon$; however, their approach is valid in any dimension.\footnote{{\bf A la relecture, je ne suis plus sur que ca vaille la peine d'ecrire cette phrase a cet endroit; c'est un peu gratuit.}}

We will also consider the two following couples of hypotheses {\bf AA}:
\begin{enumerate}
\item[{\bf AA($\mathbb Z$):}]  {\it the measure $\mu$ is {adapted} on $\mathbb Z$ (i.e., the group generated by the support $S_\mu$ of $\mu$ is equal to $\mathbb Z$)  and {aperiodic} (i.e., the group generated by $S_\mu-S_\mu$ is equal to $\mathbb Z$);}
 \item[{\bf AA($\mathbb R$):}] {\it the measure $\mu$ is {adapted} on $\mathbb R$ (i.e., the closed group generated by the support $S_\mu$ of $\mu$ is equal to $\mathbb R$) and {aperiodic} (i.e., the closed group generated by $S_\mu-S_\mu$ is equal to $\mathbb R$).}
\end{enumerate}

\subsection{Classical results} Let us now recall the result below, which concerns the probability \eqref{eq:second_probability} for $r=0$. 

\begin{theorem}[\cite{I,LP1}]
\label{prop:classical_result} 
Assume that the hypotheses {\bf AA}, {\bf C} and {\bf M($2$)} hold. Then for any continuous function $\phi $ with compact support on  $\mathbb R$, one has\footnote{Below and throughout, for any bounded random variable $Z: \Omega \to\mathbb R$ and any event $A \in \mathcal T$, one sets $\mathbb E[A;Z]:= \mathbb E[Z\mathbbm{1}_{A}]$.}
\begin{equation*}
     \lim_{n \to \infty} n^{3/2} \mathbb E[ \tau^{*+}>n; \phi(S_n)]= a^{-}(\phi):= \int_{\mathbb R^-} \phi(t) a^{-}({\rm d}t):= \frac{1}{\sigma\sqrt{2\pi}} \int_{\mathbb R^-} \phi(t)  \lambda^{-}*U^{-}({\rm d}t),
\end{equation*}
where  
\begin{itemize}
     \item $\sigma^2:= \mathbb E[Y_1^2]$;
     \item $\lambda^-$ is the counting measure on $\mathbb Z^-$  when  {\rm {\bf AA}($\mathbb Z$)} holds  (resp.\ the Lebesgue measure on $\mathbb R^-$ when  {\rm {\bf AA}($\mathbb R$)} holds);\footnote{For an upcoming use, we also introduce 
\begin{itemize} 
\item[$\bullet$] the counting measures $\lambda^{*-}$, $\lambda^+$ and $\lambda^{*+}$ on $\mathbb Z^{*-}$, $\mathbb Z^{+}$ and $\mathbb Z^{*+}$, respectively; 
\item[$\bullet$] the Lebesgue measures $\lambda^{*-}$, $\lambda^+$ and $\lambda^{*+}$ on $\mathbb R^{*-}$, $\mathbb R^{+}$ and $\mathbb R^{*+}$, respectively. 
\end{itemize}  
Notice that $\lambda^{*-}= \lambda^{-}$ and $\lambda^{*+}=\lambda^{+}$ when {\bf AA}($\mathbb R$) holds, but we keep the two notations in order to unify the statements under the two types of hypotheses {\bf AA}.}
     \item $  U^{-}$ is  the  $\sigma$-finite potential $U^-:= \sum_{n\geq0} (\mu^-)^{*n}$.
\end{itemize}
\end{theorem}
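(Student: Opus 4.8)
The plan is to reduce the statement to a renewal problem for the descending ladder process and then to exploit the asymmetry between the \emph{heavy-tailed} ladder epochs and the \emph{light-tailed} ladder heights. The first step is a duality argument: reversing the increments $Y_1,\dots,Y_n\mapsto Y_n,\dots,Y_1$ (which preserves $\mathbb P$), the event $\{\tau^{*+}>n\}=\{S_k\leq 0,\ 1\leq k\leq n\}$ is carried onto the event that the reversed walk attains its minimum over $\{0,\dots,n\}$ at the terminal time $n$, i.e.\ that $n$ is a weak descending ladder epoch. Writing $H^-_j:=S_{T^-_j}$ (law $(\mu^-)^{*j}$) for the weak descending ladder heights attached to the epochs $(T^-_j)_{j\geq 0}$, I would establish
\begin{equation*}
     \mathbb E[\tau^{*+}>n;\phi(S_n)]=\sum_{j\geq 0}\mathbb E\bigl[\phi(H^-_j);\,T^-_j=n\bigr].
\end{equation*}
The right-hand side is the slice at time-level $n$ of the two-dimensional renewal measure of the random walk $(T^-_j,H^-_j)_{j\geq 0}$ on $\mathbb N\times\mathbb R^-$, whose spatial marginal $\sum_j(\mu^-)^{*j}$ is precisely the potential $U^-$.

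Next I would assemble the two inputs from classical fluctuation theory that make the estimate work under the optimal hypotheses \textbf{C} and \textbf{M}($2$). On the one hand, \textbf{M}($2$) forces the ladder heights to have finite mean $m^-:=\mathbb E[|S_{\tau^-}|]<\infty$ (through the Wiener--Hopf / Spitzer identities), so that $U^-$ obeys the renewal theorem and the renewal function $V(x):=U^-([x,0])$ grows only linearly. On the other hand, the ladder epoch is heavy-tailed, $\mathbb P[\tau^->n]\sim c/\sqrt n$, so $\tau^-$ lies in the domain of attraction of the one-sided $1/2$-stable law and admits, \emph{jointly with the height}, a local limit theorem $n^{3/2}\,\mathbb P[\tau^-=n,\,S_{\tau^-}\in\cdot]\to\nu$ for a finite measure $\nu$ on $\mathbb R^-$. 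It is here that the dichotomy \textbf{AA}($\mathbb Z$)/\textbf{AA}($\mathbb R$) enters, via the Gnedenko, respectively Stone, local central limit theorem, producing the Gaussian constant $\frac{1}{\sigma\sqrt{2\pi}}$ and the two readings of $\lambda^-$.

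The core of the argument is then a single-big-jump analysis of the renewal slice. Since the heights accumulate negative drift $-m^-$ per step whereas $\phi$ has compact support, only boundedly many ladder steps can keep $H^-_j$ in the support of $\phi$; hence, for $T^-_j=n$ with $n$ large, exactly one ladder step must have duration $\approx n$ while the $O(1)$ remaining steps have bounded durations and build up the bounded height. Conditioning on the position of this long step and summing the pre- and post-jump renewals, I expect
\begin{equation*}
     n^{3/2}\,\mathbb E[\tau^{*+}>n;\phi(S_n)]\longrightarrow (U^-*\nu*U^-)(\phi),
\end{equation*}
and the proof concludes with the identity $U^-*\nu*U^-=\frac{1}{\sigma\sqrt{2\pi}}\,\lambda^-*U^-$. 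Removing one factor $U^-$, this is equivalent to $U^-*\nu=\frac{1}{\sigma\sqrt{2\pi}}\,\lambda^-$, which follows from the renewal relation $U^-=\delta_0+U^-*\mu^-$ once $\nu$ is identified as $\frac{1}{\sigma\sqrt{2\pi}}(\delta_0-\mu^-)*\lambda^-$. Finally, $\lambda^-*U^-(\mathrm dx)=U^-([x,0])\,\mathrm dx=V(x)\,\mathrm dx$, so the limiting measure is the renewal function against Lebesgue measure, namely the harmonic function of the walk killed on entering $]0,\infty[$; this is the bridge to the Denisov--Wachtel formulation discussed in Section \ref{sec:DW}.

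The main obstacle is to make the single-big-jump heuristic rigorous \emph{uniformly in the spatial variable}: one must show that configurations with two or more long ladder steps, as well as the regime with no long step, contribute negligibly to $n^{3/2}$ times the slice, and that the contribution of the long step factorizes in the limit into the measure $\nu$ and two independent renewal sums. This is exactly where the sharp assumption \textbf{M}($2$) is used (it simultaneously yields $m^-<\infty$ and the tail index $1/2$ of $\tau^-$), and where the joint local limit theorem for the pair $(\tau^-,S_{\tau^-})$ must be proved and inserted with controlled error, the shift of the time argument by $O(1)$ being absorbed by the regular variation of the tail. The lattice and non-lattice cases are then handled in parallel through the corresponding local central limit theorem, which accounts for $\lambda^-$ being counting measure on $\mathbb Z^-$ or Lebesgue measure on $\mathbb R^-$.
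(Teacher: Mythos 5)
Your strategy is genuinely different from the paper's and is, in outline, a viable one: it is essentially the direct probabilistic route of Vatutin--Wachtel \cite{VW} and Doney \cite{D}, whereas the paper works entirely through generating functions. Concretely, the paper first reduces to $\phi(t)=e^{\alpha t}$, $\alpha>0$, by a Laplace-transform density argument, then invokes the Spitzer/Wiener--Hopf identity $\sum_{n\geq0}s^n\mathbb E[\tau^{*+}>n;e^{\alpha S_n}]=\exp\sum_{n\geq1}\frac{s^n}{n}\mathbb E[S_n\leq0;e^{\alpha S_n}]$, feeds the classical local limit theorem into the exponent, and extracts the $n^{-3/2}$ asymptotics by differentiating in $s$ and applying Iglehart's two lemmas (Lemmas \ref{Bruijn1} and \ref{Bruijn2}); the duality identity $\sum_{n}\mathbb E[\tau^{*+}>n;S_n\in{\rm d}x]=U^-({\rm d}x)$, which is your starting point, enters there only at the very end to identify the constant. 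Your identification of the limit is correct and internally consistent: with $\nu=\frac{1}{\sigma\sqrt{2\pi}}(\delta_0-\mu^-)*\lambda^-$ one indeed gets $U^-*\nu=\frac{1}{\sigma\sqrt{2\pi}}\lambda^-$ and hence $U^-*\nu*U^-=\frac{1}{\sigma\sqrt{2\pi}}\lambda^-*U^-=a^-$.

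As a proof, however, the proposal has a genuine gap --- in fact two. First, the joint local limit theorem $n^{3/2}\,\mathbb P[\tau^-=n,\,S_{\tau^-}\in\cdot\,]\to\nu$ is not an off-the-shelf classical fact under {\bf C} and {\bf M}($2$): it is precisely (the dual of) Theorem \ref{prop2} of the paper, i.e.\ a statement of the same depth as the one you are proving, and the paper establishes it by exactly the same Wiener--Hopf/Iglehart machinery it uses for Theorem \ref{prop:classical_result}. Taking it as an input makes your argument circular in difficulty rather than self-contained. Second, the single-big-jump analysis --- showing that, uniformly over the number $j\leq n$ of ladder steps, the configurations with no ladder duration of order $n$, or with two or more, contribute $o(n^{-3/2})$ to the slice $\sum_{j}\mathbb P[T^-_j=n,\,H^-_j\in\mathrm{supp}\,\phi]$, and that the long step factorizes in the limit --- is the technical heart of this route, and you only announce it as ``the main obstacle'' without carrying it out; the uniform bounds it requires (e.g.\ $\sup_n n^{3/2}\,\mathbb P[\tau^-=n,\,S_{\tau^-}\geq -M]<\infty$ together with summability in $j$ of the short-step renewal masses) are exactly of the type that Lemmas \ref{Bruijn1} and \ref{LP1} supply in the paper's treatment. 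A minor additional point: the tail index $1/2$ of $\tau^-$ already follows from {\bf C} and Spitzer's condition; {\bf M}($2$) is needed for the finiteness of $\mathbb E[|S_{\tau^-}|]$ and for the Gaussian constant $\frac{1}{\sigma\sqrt{2\pi}}$ in the local limit theorem, not for the index itself.
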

Since some arguments will be quite useful and used in the sequel, we give below a sketch of the proof of Theorem \ref{prop:classical_result}, following \cite{I,LP1}. By a standard argument in measure theory (see Theorem 2 in Chapter XIII on Laplace transforms in the book \cite{F2}), it is sufficient to prove the above convergence for all functions $\phi$ of the form $\phi(t)=\exp(\alpha t)$, $\alpha >0$ (indeed, notice that the support of the limit measure $a^{-}({\rm d}t)$ is included in $\mathbb R^-$). We shall use the same remark when proving Theorems \ref{prop2} and \ref{MAIN=}.

\begin{proof}[Sketch of the proof of Theorem \ref{prop:classical_result} in the case {\rm{\bf AA}($\mathbb Z$)}] 
We shall use the following identity, which is a consequence of the Wiener-Hopf factorization (see \cite[P5 in page 181]{S}):
\begin{equation}
\label{identity1}
     \phi_\alpha(s):= \sum_{n\geq 0}s^n \mathbb E[ \tau^{*+}>n; e^{\alpha S_n}]= \exp B_\alpha(s),
     \quad \forall s \in [0, 1[,\quad \forall \alpha  >0,
\end{equation}
where 
\begin{equation*}
     B_\alpha(s):=\sum_{n\geq1}\frac{s^n}{n}\mathbb E[ S_n\leq 0; e^{\alpha S_n}].
\end{equation*}
Further, by the classical local limit theorem on $\mathbb Z$ (this is here that we use {\bf M($2$)}, see for instance \cite[P10 in page 79]{S}), one gets
\begin{equation*}
     \mathbb E[ S_n\leq 0; e^{\alpha S_n}]=\frac{1}{\sigma\sqrt{2\pi n}} \frac{1}{1-e^{-\alpha}}(1+o(1)),
     \quad n\to\infty.
\end{equation*}     
Accordingly, the sequence $(n^{3/2}\mathbb E[ \tau^{*+}>n; e^{\alpha S_n}])_{n \geq 1}$ is bounded, thanks to Lemma \ref{Bruijn1} below (taken from \cite[Lemma 2.1]{I}), applied with $b_n:= \mathbb E[ S_n\leq 0; e^{\alpha S_n}]/n$ and $d_n:= \mathbb E[ \tau^{*+}>n; e^{\alpha S_n}]$.

\begin{lemma}[\cite{I}]
\label{Bruijn1}
Let $\sum_{n\geq0}d_ns^n=\exp\sum_{n\geq0}b_ns^n$. If the sequence $(n^{3/2}b_n)_{n \geq 1}$ is bounded, the same holds for $(n^{3/2}d_n)_{n \geq 1}$.
\end{lemma}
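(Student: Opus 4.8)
The plan is to argue directly at the level of coefficients, exploiting the fact that summable sequences decaying like $n^{-3/2}$ form a convolution-closed class with \emph{quantitatively} controlled constants. Write $B(s)=\sum_{n\geq0}b_ns^n$ and $D(s)=\exp B(s)=\sum_{n\geq0}d_ns^n$. First I would reduce to the case $b_0=0$: since $D(s)=e^{b_0}\exp\bigl(\sum_{n\geq1}b_ns^n\bigr)$, the factor $e^{b_0}$ merely rescales every $d_n$ by a constant and is irrelevant to the boundedness of $(n^{3/2}d_n)_{n\geq1}$. So set $a_n:=b_n$ with $a_0=0$ and $|a_n|\leq A n^{-3/2}$ for $n\geq1$, where $A:=\sup_{n\geq1}n^{3/2}|b_n|$, and introduce the weight $\rho(n):=(n\vee 1)^{-3/2}$, for which $|a_n|\leq A\rho(n)$ for all $n$ and $L:=\sum_{n\geq0}|a_n|\leq A\,\zeta(3/2)<\infty$.

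The first key step is a convolution-closure estimate: there is a constant $\kappa$, depending only on $\rho$, such that for any two summable sequences $u,v$ with $|u_n|\leq C_u\rho(n)$ and $|v_n|\leq C_v\rho(n)$ one has $|(u*v)_n|\leq \kappa\,(C_u\|v\|_1+C_v\|u\|_1)\,\rho(n)$ for all $n$. I would prove this by splitting the convolution sum at $j=n/2$: when $j\leq n/2$ one has $n-j\geq\lceil n/2\rceil$, so $|v_{n-j}|\leq C_v\rho(\lceil n/2\rceil)\leq 2^{3/2}C_v\rho(n)$ and this half is summed out against $\|u\|_1$; the half $j>n/2$ is symmetric, and the finitely many small values of $n$ are absorbed into $\kappa$.

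Next I would expand the exponential. Because $a_0=0$, the $m$-fold convolution $a^{*m}$ is supported on $\{n\geq m\}$, so for each fixed $n$ the identity $d_n=\sum_{m=0}^{n}(a^{*m})_n/m!$ is a \emph{finite} sum, sidestepping any convergence question. Iterating the closure estimate gives $|(a^{*m})_n|\leq C_m\rho(n)$ with $C_1=A$ and $\|a^{*m}\|_1\leq L^m$, whence $C_{m+1}\leq\kappa\,(L\,C_m+A\,L^m)$; solving this linear recursion produces a geometric bound $C_m\leq c\,(\kappa L)^m$ for a suitable $c$. Therefore $|d_n|\leq\rho(n)\sum_{m\geq0}C_m/m!\leq c\,e^{\kappa L}\rho(n)$, i.e.\ $|d_n|\leq C\,n^{-3/2}$ for $n\geq1$ with $C=c\,e^{\kappa L}$, and reinstating the factor $e^{b_0}$ finishes the proof.

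The main obstacle, and the reason for routing through the exponential series, is precisely the control of the constants $C_m$. The tempting shortcut is to differentiate $D=\exp B$ to get $D'=B'D$, read off the recurrence $n\,d_n=\sum_{k=1}^{n}k\,b_k\,d_{n-k}$, and induct on $|d_n|\leq C n^{-3/2}$ directly; but this fails for large $A$, since the convolution of $|k\,b_k|\asymp k^{-1/2}$ against $(n-k)^{-3/2}$ reproduces the bound with the multiplicative factor $A\,\zeta(3/2)$, which need not be $<1$ (the genuine constant in fact grows like $e^{A\zeta(3/2)}$, as one sees from the model case $b_n=A n^{-3/2}$ via $D'(s)\sim A\sqrt{\pi}\,e^{A\zeta(3/2)}(1-s)^{-1/2}$). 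Passing through $\exp B$ and the convergent series $\sum_m C_m/m!$ is exactly what converts this multiplicative blow-up into a finite constant; an equivalent device would be to factor $\exp B=\prod_{i=1}^{P}\exp(B/P)$ with $P$ large enough that each factor falls under the easy small-norm case, and then convolve the $P$ factors using the same closure estimate.
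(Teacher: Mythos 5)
Your argument is correct, and it is worth noting that the paper does not actually prove this lemma: it is quoted verbatim from Iglehart \cite[Lemma 2.1]{I} and used as a black box, so there is no in-paper proof to match. What you supply is a self-contained proof via the exponential expansion $d_n=\sum_{m=0}^{n}(b^{*m})_n/m!$ (finite for each $n$ once $b_0$ is factored out), combined with the convolution-closure estimate for the weight $\rho(n)=(n\vee1)^{-3/2}$; both ingredients are sound. The splitting at $j=n/2$ does give $|(u*v)_n|\leq 2^{3/2}(C_u\|v\|_1+C_v\|u\|_1)\rho(n)$, the recursion $C_{m+1}\leq\kappa(LC_m+AL^m)$ follows, and although its solution is not purely geometric (iterating gives $C_m\leq A(\kappa L)^{m-1}(1+\kappa(m-1))$, i.e.\ an extra polynomial factor in $m$), this is immaterial since you only need $\sum_m C_m/m!<\infty$; you might state that step as convergence of the series rather than as a geometric bound. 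Two further merits of your write-up: it works with $|b_n|$ throughout and therefore covers signed sequences, which the paper explicitly needs when invoking the lemma in the proof of Theorem \ref{prop2}; and your diagnosis of why the naive induction through $nd_n=\sum_{k=1}^{n}kb_kd_{n-k}$ fails for large $A=\sup n^{3/2}|b_n|$ (the induction constant picks up a factor of order $A\zeta(3/2)$, so it only closes for small $A$) correctly identifies the obstruction that the exponential series, or equivalently your $P$-fold factorization $\exp B=\prod_{i=1}^{P}\exp(B/P)$, is designed to circumvent.
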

Differentiating the two members of \eqref{identity1} with respect to $s$, one gets 
\begin{equation*}
\label{identity1'}
     \phi'_\alpha(s)=\sum_{n\geq 1}ns^{n-1} \mathbb E[ \tau^{*+}>n; e^{\alpha S_n}]=\phi_\alpha(s)\sum_{n\geq1}{s^{n-1} }\mathbb E[ S_n\leq 0; e^{\alpha S_n}].
\end{equation*}
We then make use of Lemma \ref{Bruijn2} (see \cite[Lemma 2.2]{I} for the original statement), applied with $c_n:=\mathbb E[ S_n\leq 0; e^{\alpha S_n}]=nb_n$, $d_n:= \mathbb E[ \tau^{*+}>n; e^{\alpha S_n}]$ and $a_n:= n \mathbb E[ \tau^{*+}>n; e^{\alpha S_n}]$.

\begin{lemma}[\cite{I}]
\label{Bruijn2}
Let $(c_n)_{n \geq 0}$ and $(d_n)_{n \geq 0}$ be sequences of non-negative real numbers such that
\begin{enumerate}
\item $\lim_{n \to \infty} \sqrt{n} c_n = c>0$;
\item $\sum_{n\geq0} d_n = D<\infty$;
\item $(nd_n)_{n \geq 0}$ is bounded.
\end{enumerate}
If $a_n= \sum_{0\leq k\leq n-1} d_kc_{n-k}$, then $\lim_{n\to \infty} \sqrt{n} a_n = cD$.
\end{lemma}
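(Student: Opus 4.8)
The plan is to prove directly that $\sqrt{n}\,a_n\to cD$, viewing $a_n$ as a convolution and isolating the ``diagonal'' indices $k\approx n$ where the estimate is delicate. First I would record the elementary consequences of the three hypotheses: setting $b_m:=\sqrt{m}\,c_m$, assumption (i) says $b_m\to c$, so that $(b_m)_{m\geq1}$ is bounded by some $M$ and in particular $c_m\to0$; assumption (iii) provides a constant $C$ with $d_k\leq C/k$ for all $k\geq1$; and assumption (ii) says $\sum_{k\geq0}d_k=D<\infty$. Since all the $c_n$ and $d_n$ are non-negative, I may freely split and rearrange the sums. Rewriting
\[
\sqrt{n}\,a_n=\sum_{k=0}^{n-1}d_k\,\sqrt{n}\,c_{n-k}=\sum_{k=0}^{n-1}d_k\sqrt{\tfrac{n}{n-k}}\,b_{n-k},
\]
I observe that for each fixed $k$ the $k$-th summand converges to $c\,d_k$, because $\sqrt{n/(n-k)}\to1$ and $b_{n-k}\to c$; and the target value is precisely $cD=\sum_{k\geq0}c\,d_k$. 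The whole point is that one cannot pass to the limit term by term inside the full sum, since near $k=n$ the weight $\sqrt{n/(n-k)}$ is unbounded.

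To handle this I would split the sum at $\lfloor n/2\rfloor$. On the range $0\leq k\leq\lfloor n/2\rfloor$ one has $n-k\geq n/2$, whence $\sqrt{n/(n-k)}\leq\sqrt2$ and the summand is bounded by $\sqrt2\,M\,d_k$, a summable majorant by (ii). The dominated convergence theorem for series (on the integers with counting measure) then gives, as $n\to\infty$,
\[
\sum_{k=0}^{\lfloor n/2\rfloor}d_k\,\sqrt{n}\,c_{n-k}\longrightarrow\sum_{k\geq0}c\,d_k=cD.
\]
It thus remains to show that the remaining ``diagonal'' part $R_n:=\sum_{\lfloor n/2\rfloor<k\leq n-1}d_k\,\sqrt{n}\,c_{n-k}$ tends to $0$.

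For $R_n$ the plan is to use both remaining hypotheses through a second, finer splitting controlled by a small parameter $\eta\in(0,1/2)$: write $R_n=R_n'+R_n''$, where $R_n'$ gathers the indices $n/2<k\leq(1-\eta)n$ and $R_n''$ the indices $(1-\eta)n<k\leq n-1$. In $R_n'$ one has $n-k\geq\eta n$, so $\sqrt{n}\,c_{n-k}=\sqrt{n/(n-k)}\,b_{n-k}\leq M/\sqrt{\eta}$, and hence $R_n'\leq(M/\sqrt\eta)\sum_{k>n/2}d_k$, which is small for $n$ large since the tail of the convergent series (ii) vanishes. In $R_n''$ one has $k\geq(1-\eta)n\geq n/2$, so (iii) gives $d_k\leq 2C/n$, while (writing $j=n-k$) $\sum_{(1-\eta)n<k\leq n-1}c_{n-k}\leq M\sum_{j=1}^{\lfloor\eta n\rfloor}j^{-1/2}\leq 2M\sqrt{\eta n}$; multiplying out yields $R_n''\leq 4MC\sqrt{\eta}$ uniformly in $n$. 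Choosing first $\eta$ small to make $R_n''$ small and then $n$ large to make $R_n'$ small shows $\limsup_n R_n$ is arbitrarily small, so $R_n\to0$ and the lemma follows. The main obstacle is exactly this diagonal estimate: neither hypothesis suffices alone, since the summability (ii) cannot tame the exploding weight $\sqrt{n/(n-k)}$ as $k\to n$, whereas (iii) by itself would only give $d_k\leq C/k$, which is not summable; the proof works precisely by applying the two bounds on complementary ranges of $k$.
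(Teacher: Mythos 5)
Your proof is correct. Note that the paper itself does not prove this lemma: it is quoted from Iglehart \cite{I} (Lemma 2.2 there), so there is no in-paper argument to compare against. Your three-way splitting of the convolution --- dominated convergence on $k\leq n/2$ where the weight $\sqrt{n/(n-k)}$ is bounded, the tail of $\sum d_k$ on $n/2<k\leq(1-\eta)n$, and the combination of $d_k\leq C/k$ with $\sum_{j\leq \eta n}c_j=O(\sqrt{\eta n})$ on the diagonal range --- is exactly the standard route for such convolution asymptotics, and all the estimates (in particular the uniform bound $R_n''\leq 4MC\sqrt{\eta}$) check out; your closing remark that hypotheses (ii) and (iii) must be used on complementary ranges correctly identifies the only delicate point.
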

This way, one reaches the conclusion that
\begin{equation*}
     \lim_{n \to \infty}n^{3/2} \mathbb E[ \tau^{*+}>n; e^{\alpha S_n}]= \frac{1}{\sigma\sqrt{2\pi}}\frac{1}{1-e^{-\alpha}} \sum_{n\geq 0}\mathbb E[ \tau^{*+}>n; e^{\alpha S_n}].
\end{equation*}
To conclude, it remains to express differently the limit. First, the factor $1/({1-e^{-\alpha}})$ is equal to $\int_{\mathbb R } e^{\alpha t} \lambda^-({\rm d}t)$. Further, since the vectors $(Y_1, \ldots, Y_n)$ and $(Y_n, \ldots, Y_1)$ have the same law, one gets
\begin{eqnarray*}
\sum_{n\geq 0}\mathbb E[ \tau^{*+}>n; e^{\alpha S_n}]&=&
\sum_{n\geq 0}\mathbb E[ S_1\leq 0, S_2\leq 0,\ldots, S_n\leq 0; e^{\alpha S_n}] \\
&=&
\sum_{n\geq 0}\mathbb E[ S_n\leq S_{n-1}, S_n\leq S_{n-2},    \ldots, S_n\leq 0; e^{\alpha S_n}] \\
&=&
\sum_{n\geq 0}\mathbb E[ \exists \ell \geq 0: T_\ell^{-}=n; e^{\alpha S_n}] \\
&=&
\sum_{\ell\geq 0}\mathbb E[ e^{\alpha S_{T_\ell^{-}}}]=  U^-(x\mapsto e^{\alpha x}),
\end{eqnarray*}
i.e.,  $
\sum_{n\geq 0}\mathbb E[ \tau^{*+}>n;   S_n\in {\rm d}x]
= U^-({\rm d}x)$,
so that 
\begin{equation*}
     \frac{1}{\sigma\sqrt{2\pi}} \frac{1}{1-e^{-\alpha}} \sum_{n\geq 0}\mathbb E[ \tau^{*+}>n; e^{\alpha S_n}]= \frac{1}{\sigma\sqrt{2\pi}}\int_{\mathbb R^-} e^{\alpha t}\lambda^{-}*U^{-}({\rm d}t).
\end{equation*}
The proof is complete.
\end{proof}

\begin{rem}
\label{rem-1}
For similar reasons as in the proof of Theorem \ref{prop:classical_result}, one has
\begin{align*}
     &\textstyle\sum_{n\geq 0}\mathbb E[ \tau^{+}>n;   S_n\in {\rm d}x]= U^{*-}({\rm d}x):=\sum_{n \geq 0}(\mu^{*-})^{*n}({\rm d}x),\\
     &\textstyle\sum_{n\geq 0}\mathbb E[ \tau^{*-}>n;   S_n\in {\rm d}x]= U^{+}({\rm d}x):=\sum_{n \geq 0}(\mu^{+})^{*n}({\rm d}x),\\
     &\textstyle\sum_{n\geq 0}\mathbb E[ \tau^{-}>n;   S_n\in {\rm d}x]= U^{*+}({\rm d}x):=\sum_{n \geq 0}(\mu^{*+})^{*n}({\rm d}x),
\end{align*}    
as well as the weak convergences, as $n \to \infty$,
\begin{eqnarray*}
n^{3/2}\mathbb E[\tau^{*+}>n; S_n\in {\rm d}x ]\quad {\longrightarrow}& \phantom{{}^*}a^-({\rm d}x)&\hspace{-3mm}:= (1/\sigma \sqrt{2\pi})\lambda^{-}*U^-,\\
n^{3/2}\mathbb E[\tau^{+}>n; S_n\in {\rm d}x ]\quad {\longrightarrow}& a^{*-}({\rm d}x)&\hspace{-3mm}:=(1/\sigma \sqrt{2\pi})\lambda^{*-}*U^{*-},\\
n^{3/2}\mathbb E[\tau^{*-}>n; S_n\in {\rm d}x ]\quad {\longrightarrow}& \phantom{{}^*}a^+({\rm d}x)&\hspace{-3mm}:= (1/\sigma \sqrt{2\pi})\lambda^{+}*U^+,\\
n^{3/2}\mathbb E[\tau^{-}>n; S_n\in {\rm d}x ]\quad {\longrightarrow}& a^{*+}({\rm d}x)&\hspace{-3mm}:= (1/\sigma \sqrt{2\pi})\lambda^{*+}*U^{*+}.
\end{eqnarray*}
\end{rem}

We conclude this part by finding the asymptotic behavior of $\mathbb P[\tau^{*+}>n]$. Using the well-known expansion 
\begin{equation*}
     \sqrt{1-s}=\exp\left(\frac{1}{2}\ln(1-s)\right)=\exp\left(-\frac{1}{2}\sum_{n\geq 1}\frac{s^n}{n}\right)
\end{equation*}
and setting $\alpha=0$ in \eqref{identity1}, one gets that for $s$ close to $1$,
\begin{equation*}
\label{eq:}
     \sum_{n\geq 0}s^n \mathbb P[ \tau^{*+}>n]= \exp \left(\sum_{n\geq1}\frac{s^n}{n}\mathbb P[ S_n\leq 0]\right)=\frac{\exp \kappa}{\sqrt{1-s}}(1+o(1)),
\end{equation*}
where
\begin{equation}
\label{eq:def_kappa}
     \kappa = \sum_{n\geq1}\frac{1}{n}\left(\mathbb P[ S_n\leq 0]-\frac{1}{2}\right).
\end{equation}
Notice that the series in \eqref{eq:def_kappa} is absolutely convergent, see \cite[Theorem 3]{R}.\footnote{There also exists the following expression for $\kappa$: $e^\kappa = (\sqrt{2}/\sigma)\mathbb E[S_{\tau^{*+}}]$, see \cite[P5 in Section 18]{S}.\label{footnote:Rim}} By a standard Tauberian theorem, since the sequence $(\mathbb P[\tau^{*+}>n])_{n\geq 0}$ is decreasing, one obtains (see \cite{LP1})%\footnote{\bf Bonne reference ?}
\begin{equation}
\label{eq:ab_eps}
     \mathbb P[\tau^{*+}>n] = \frac{\exp \kappa}{\sqrt{\pi n}}(1+o(1)),\quad n\to\infty.
\end{equation}
Note that the monotonicity of the sequence $(\mathbb P[\tau^{*+}>n])_{n\geq 0}$ is crucial to replace the Ces\`aro means convergence by the usual convergence.

\subsection{Extensions}

Equation \eqref{eq:ab_eps} shows that the asymptotic behavior of $\mathbb P[\tau^{*+}>n]$ is in $1/\sqrt{n}$ as $n\to \infty$. As for the probability $\mathbb P[\tau^{*+}=n]$, we have the following result, which is proved in \cite{AD,E}.

\begin{proposition}
\label{SECOND=0}
Assume that the hypotheses {\bf AA}, {\bf C} and {\bf M($2$)} hold. Then the sequence $(n^{3/2}\mathbb P[\tau^{*+}=n])_{n\geq 0}$ converges to some positive constant.
\end{proposition}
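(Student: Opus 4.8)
The plan is to reduce the study of $\mathbb P[\tau^{*+}=n]$ to the already-understood quantity $\mathbb E[\tau^{*+}>n-1;\phi(S_{n-1})]$ of Theorem \ref{prop:classical_result} by a one-step conditioning. Writing $\bar\mu(y):=\mathbb P[Y_1>y]$ and decomposing $\{\tau^{*+}=n\}=\{S_1\leq 0,\ldots,S_{n-1}\leq 0,\,S_n>0\}$ according to $\mathcal T_{n-1}$, then using that $Y_n$ is independent of $\mathcal T_{n-1}$ and that $S_{n-1}\leq 0$ on $\{\tau^{*+}>n-1\}$, one gets
\[
     \mathbb P[\tau^{*+}=n]=\mathbb E[\tau^{*+}>n-1;\psi(S_{n-1})],\qquad \psi(t):=\bar\mu(-t)=\mathbb P[Y_1>-t]\quad(t\leq 0).
\]
Granting that the weak convergence $n^{3/2}\mathbb E[\tau^{*+}>n;S_n\in{\rm d}x]\to a^-({\rm d}x)$ of Remark \ref{rem-1} can be tested against $\psi$, and since $(n-1)^{3/2}/n^{3/2}\to 1$, this yields
\[
     \lim_{n\to\infty} n^{3/2}\mathbb P[\tau^{*+}=n]=a^-(\psi)=\frac{1}{\sigma\sqrt{2\pi}}\int_{\mathbb R^-}\mathbb P[Y_1>-t]\,(\lambda^-*U^-)({\rm d}t),
\]
which is the desired behavior, with an explicit constant.

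The function $\psi$ is bounded ($\psi\leq 1$) but neither compactly supported nor, a priori, continuous, so Theorem \ref{prop:classical_result} (whose test functions are continuous with compact support) does not apply directly. I would split $\psi=\psi\mathbbm 1_{[-M,0]}+\psi\mathbbm 1_{(-\infty,-M)}$. On the compact part, $\psi$ is monotone, hence continuous outside a countable set; under {\bf AA($\mathbb R$)} the limit $a^-$ is absolutely continuous, and under {\bf AA($\mathbb Z$)} it is carried by $\mathbb Z^-$ and all expressions are plain sums, so in both cases $\psi\mathbbm 1_{[-M,0]}$ can be squeezed between continuous compactly supported functions and Theorem \ref{prop:classical_result} gives $\lim_n n^{3/2}\mathbb E[\tau^{*+}>n-1;\psi\mathbbm 1_{[-M,0]}(S_{n-1})]=a^-(\psi\mathbbm 1_{[-M,0]})$. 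Letting $M\to\infty$ and using Fatou (the quantities are monotone in $M$) already gives the lower bound $\liminf_n n^{3/2}\mathbb P[\tau^{*+}=n]\geq a^-(\psi)$.

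The matching upper bound is the main obstacle, since it requires controlling the escaping tail $t<-M$ \emph{uniformly} in $n$. The crux is a uniform local estimate of the form
\[
     \sup_{n\geq 1}\ n^{3/2}\,\mathbb E[\tau^{*+}>n;\,S_n\in[-k-1,-k)]\leq C(1+k),\qquad k\geq 0,
\]
with $C$ independent of $n,k$; the linear growth in $k$ reflects $(\lambda^-*U^-)([-k-1,-k))\asymp k$, coming from the renewal theorem applied to the descending ladder heights (whose mean is finite under {\bf M($2$)}). Granting it,
\[
     n^{3/2}\,\mathbb E[\tau^{*+}>n-1;\psi\mathbbm 1_{(-\infty,-M)}(S_{n-1})]\leq C\sum_{k>M}(1+k)\,\mathbb P[Y_1>k],
\]
and the right-hand side tends to $0$ as $M\to\infty$ precisely because $\sum_k k\,\mathbb P[Y_1>k]\asymp\mathbb E[(Y_1^+)^2]<\infty$ under {\bf M($2$)}. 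This is exactly where the second moment is used, and it is also the optimal assumption for finiteness of the constant $a^-(\psi)$. Positivity is clear since $\mathbb P[Y_1>-t]>0$ for $t$ close to $0$ (by {\bf C} and adaptedness, $\mu(\mathbb R^{*+})>0$) while $\lambda^-*U^-$ charges every neighborhood of $0$.

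I expect the uniform local estimate to be the genuine difficulty: the weak convergence of Theorem \ref{prop:classical_result} gives convergence of $n^{3/2}\mathbb E[\tau^{*+}>n;S_n\in\cdot]$ on each fixed compact, but no bound uniform over translates going to $-\infty$. I would establish it either by invoking refined local limit theorems for walks conditioned to stay negative (in the spirit of Denisov--Wachtel \cite{DW}, or Vatutin--Wachtel/Caravenna), or, to remain self-contained, by re-running the generating-function analysis of Lemmas \ref{Bruijn1}--\ref{Bruijn2} while tracking the spatial variable, combined with the identity $\sum_n\mathbb E[\tau^{*+}>n;S_n\in{\rm d}x]=U^-({\rm d}x)$ of Remark \ref{rem-1} and the renewal asymptotics $U^-([t,0])\asymp|t|$. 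A purely analytic alternative is to perform a singularity analysis of $\mathbb E[s^{\tau^{*+}}]=1-\sqrt{1-s}\,g(s)$, where $g(s)=\exp\big(\sum_{n\geq1}\tfrac{s^n}{n}(\mathbb P[S_n\leq 0]-\tfrac12)\big)$ is continuous on $\overline{\mathbb D}$ with $g(1)=e^\kappa$; transferring its $\sqrt{1-s}$-singularity would give the constant $e^\kappa/(2\sqrt\pi)$, but since $(\mathbb P[\tau^{*+}=n])_n$ is not monotone there is no Tauberian shortcut, and the transfer would demand analytic continuation of $g$ across $|s|=1$, which {\bf M($2$)} does not provide. I therefore favor the probabilistic route above.
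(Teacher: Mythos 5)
The paper offers no proof of Proposition \ref{SECOND=0}: it is quoted from \cite{AD} and \cite{E}, so there is no in-paper argument to measure yours against. Your reduction is the right starting point. The identity $\mathbb P[\tau^{*+}=n]=\mathbb E[\tau^{*+}>n-1;\psi(S_{n-1})]$ with $\psi(t)=\mathbb P[Y_1>-t]$ is the classical duality decomposition at the last step, and the constant it targets is the correct one: one checks that $\int_{\mathbb R^-}\mathbb P[Y_1>-t]\,\lambda^-\!*U^-({\rm d}t)=\mathbb E[S_{\tau^{*+}}]$, so that $a^-(\psi)=e^{\kappa}/(2\sqrt{\pi})$ by the identity of Footnote \ref{footnote:Rim}, which is consistent with \eqref{eq:ab_eps}. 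The lower bound $\liminf_n n^{3/2}\mathbb P[\tau^{*+}=n]\geq a^-(\psi)$ is complete as you present it, and you correctly locate where {\bf M($2$)} enters.

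The gap is the uniform local estimate $\sup_n n^{3/2}\mathbb P[\tau^{*+}>n,\ S_n\in[-(k+1),-k]]\leq C(1+k)$, which you state but do not prove, and which carries the entire weight of the matching upper bound. It is worth stressing that it cannot be obtained from the paper's own machinery by soft means. The exponential-moment bound coming from \eqref{identity1} and Lemma \ref{Bruijn1} gives $\sup_n n^{3/2}\mathbb E[\tau^{*+}>n;e^{\alpha S_n}]\leq C_\alpha$ with $C_\alpha\asymp\alpha^{-2}$ as $\alpha\downarrow 0$ (the limit is $(\sigma\sqrt{2\pi})^{-1}\int e^{\alpha t}\lambda^-\!*U^-({\rm d}t)$, and $\lambda^-\!*U^-([-x,0])\asymp x^2$), so a Chernoff bound with $\alpha=1/(k+1)$ only yields $O((1+k)^2)$ per unit window. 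Paired with the Chebyshev tail $\mathbb P[Y_1>k]\leq C/k^2$ --- which is exactly what {\bf M($2$)} provides and is borderline non-summable against a weight growing in $k$ --- this produces a divergent series, and one would need {\bf M($3$)} to conclude along that route. The genuinely local $O(1+k)$ bound requires a local limit theorem for the walk conditioned to stay non-positive that is uniform over spatial windows drifting to $-\infty$; this is precisely the content of \cite{VW} (or of the generating-function analysis carried out in \cite{AD,E}), i.e., a result of essentially the same depth as the proposition itself. As written, your proposal is therefore a correct and informative reduction of the statement to that estimate, not yet a proof of it.
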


We now refine Proposition \ref{SECOND=0}, by adding in the probability the information of the position of the walk at time $\tau^{*+}$. Using the same approach as for Theorem \ref{prop:classical_result}, we may obtain the following theorem, which we did not find in the literature:
\begin{theorem}
\label{prop2} 
Assume that the hypotheses {\bf AA}, {\bf C} and {\bf M($2$)} hold. Then for any continuous function $\phi $ with compact support on  $\mathbb R$, one has
\begin{equation*}
     \lim_{n \to \infty} n^{3/2} \mathbb E[ \tau^{*+}=n; \phi(S_n)]=b^{*+}(\phi):=\int_{\mathbb R^+} \phi(t) b^{*+}({\rm d}t):= \frac{1}{\sigma\sqrt{2\pi}}\int_{\mathbb R^+} \phi(t)  \lambda^{*+}*\mu^{*+}({\rm d}t ),
\end{equation*}
where $\lambda^{*+}$ is  the counting measure on $\mathbb Z^{*+}$  when  {\rm {\bf AA}($\mathbb Z$)} holds (resp.\ the Lebesgue measure on $\mathbb R^{*+}$ when  {\rm {\bf AA}($\mathbb R$)} holds).
\end{theorem}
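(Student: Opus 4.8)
The plan is to mimic the proof of Theorem \ref{prop:classical_result}, replacing the weak descending factorization \eqref{identity1} by its strict ascending counterpart. By the reduction recalled after Theorem \ref{prop:classical_result} (the limiting measure being carried by $\mathbb R^{*+}$, it is determined by its Laplace transform), it suffices to treat the test functions $\phi(t)=e^{\alpha t}$; since the support is now $\mathbb R^{*+}$ one must take $\alpha<0$, so that $e^{\alpha S_n}\leq 1$ on $\{S_n>0\}$ and no exponential moment beyond {\bf M($2$)} is needed. I set $\Psi_\alpha(s):=\sum_{n\geq 1}s^n\mathbb E[\tau^{*+}=n;e^{\alpha S_n}]$ and use the strict ascending Wiener--Hopf factorization dual to \eqref{identity1}, namely
\[
1-\Psi_\alpha(s)=\exp\bigl(-A_\alpha(s)\bigr),\qquad A_\alpha(s):=\sum_{n\geq 1}\frac{s^n}{n}\,\mathbb E[S_n>0;e^{\alpha S_n}],
\]
valid for $s\in[0,1[$ and $\alpha<0$, all terms being bounded by $\mathbb P[S_n>0]\leq 1$.

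Next I would feed in the local limit theorem, which is where {\bf M($2$)} enters: exactly as for $\mathbb E[S_n\leq 0;e^{\alpha S_n}]$, one gets $\mathbb E[S_n>0;e^{\alpha S_n}]=\frac{1}{\sigma\sqrt{2\pi n}}\,\widehat\lambda^{*+}(\alpha)(1+o(1))$, where $\widehat\lambda^{*+}(\alpha):=\int_{\mathbb R^{*+}}e^{\alpha t}\lambda^{*+}(\mathrm dt)$ equals $-1/\alpha$ (resp.\ $e^\alpha/(1-e^\alpha)$) under {\bf AA($\mathbb R$)} (resp.\ {\bf AA($\mathbb Z$)}). Thus $(n^{3/2}\cdot\frac1n\mathbb E[S_n>0;e^{\alpha S_n}])_n$ is bounded, and Lemma \ref{Bruijn1} applied to $1-\Psi_\alpha=\exp(-A_\alpha)$ shows $(n^{3/2}\mathbb E[\tau^{*+}=n;e^{\alpha S_n}])_n$ is bounded. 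Differentiating the identity gives $\Psi_\alpha'(s)=A_\alpha'(s)(1-\Psi_\alpha(s))$, that is
\[
\sum_{n\geq 1}n\,s^{n-1}\mathbb E[\tau^{*+}=n;e^{\alpha S_n}]=\Bigl(\sum_{m\geq 1}s^{m-1}\mathbb E[S_m>0;e^{\alpha S_m}]\Bigr)\bigl(1-\Psi_\alpha(s)\bigr),
\]
and I would then invoke Lemma \ref{Bruijn2} with $c_m=\mathbb E[S_m>0;e^{\alpha S_m}]$ and $d_k=[s^k](1-\Psi_\alpha)$ to reach $\lim_{n}n^{3/2}\mathbb E[\tau^{*+}=n;e^{\alpha S_n}]=\frac{\widehat\lambda^{*+}(\alpha)}{\sigma\sqrt{2\pi}}\bigl(1-\Psi_\alpha(1)\bigr)$, with $\Psi_\alpha(1)=\int_{\mathbb R^{*+}}e^{\alpha t}\mu^{*+}(\mathrm dt)$.

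The identification of this constant as $b^{*+}(e^{\alpha\cdot})$ is the step I expect to be the main obstacle, for two reasons. First, the coefficients $d_k=[s^k](1-\Psi_\alpha)$ are negative for $k\geq 1$, so Lemma \ref{Bruijn2} does not apply verbatim; one must either extend it to absolutely summable sequences of arbitrary sign (here $\sum_k|d_k|<\infty$ since $(k^{3/2}d_k)_k$ is bounded), or run the argument through the overshoot identity $\mathbb E[\tau^{*+}=n;e^{\alpha S_n}]=\mathbb E[\tau^{*+}>n-1;h_\alpha(S_{n-1})]$, where $h_\alpha(x)=\int_{\{y>-x\}}e^{\alpha(x+y)}\mu(\mathrm dy)$, combined with the weak convergence of Remark \ref{rem-1}. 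The second, and genuine, difficulty is that $h_\alpha$ is continuous and vanishes at $-\infty$ but is not compactly supported, whereas the measures $n^{3/2}\mathbb E[\tau^{*+}>n;S_n\in\mathrm dx]$ have total mass tending to infinity; the passage to the limit then requires a uniform local bound $n^{3/2}\mathbb E[\tau^{*+}>n;S_n\in\mathrm dx]\leq C(1+|x|)$ together with the tail estimate $h_\alpha(x)\leq\mathbb P[Y_1>-x]$, and it is precisely here that {\bf M($2$)} is used optimally, via $\int_{\mathbb R^{*+}}t\,\mathbb P[Y_1>t]\,\mathrm dt<\infty$.

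Carrying this out yields $\lim_n n^{3/2}\mathbb E[\tau^{*+}=n;e^{\alpha S_n}]=\int_{\mathbb R^-}h_\alpha\,\mathrm da^-$, which by Fubini equals $\int_{\mathbb R^{*+}}e^{\alpha t}(a^-*\mu)(\mathrm dt)$, i.e.\ the limiting measure is the restriction to $\mathbb R^{*+}$ of $a^-*\mu$. It then remains to reconcile this convolution expression with the announced $\frac{1}{\sigma\sqrt{2\pi}}\lambda^{*+}*\mu^{*+}$, a piece of Wiener--Hopf bookkeeping that I would check with particular care, and in particular against the consistency constraint $b^{*+}(\mathbb R^{*+})=\lim_n n^{3/2}\mathbb P[\tau^{*+}=n]$ forced by Proposition \ref{SECOND=0} and \eqref{eq:ab_eps}; verifying that the two forms of the constant agree is, to my mind, the crux of the proof.
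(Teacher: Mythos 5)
Your argument is, in all essentials, the paper's own proof: the same strict ascending Spitzer--Baxter identity (this is exactly \eqref{identity2}, written there with $e^{-\alpha S_n}$, $\alpha>0$), the same local limit theorem input, Lemma \ref{Bruijn1} for the a priori bound, then differentiation and Lemma \ref{Bruijn2} with $c_m=\mathbb E[S_m>0;e^{\alpha S_m}]$ and $d_k=[s^k](1-\Psi_\alpha)$. Your first caveat (the $d_k$ are $\leq 0$ for $k\geq 1$) is resolved exactly as you suggest: since $(n^{3/2}|d_n|)_n$ is bounded the sequence is absolutely summable, and Lemma \ref{Bruijn2} extends by splitting $d_k$ into its positive and negative parts (with $D=1-\Psi_\alpha(1)>0$). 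The overshoot detour, and the domination problems it creates for the non-compactly-supported $h_\alpha$, are therefore unnecessary.

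Your second caveat is the substantive one, and you should trust the constant your computation produces rather than try to force it into the announced form. The method yields
\[
\lim_{n\to\infty}n^{3/2}\,\mathbb E[\tau^{*+}=n;e^{\alpha S_n}]=\frac{1}{\sigma\sqrt{2\pi}}\,\widehat\lambda^{*+}(\alpha)\bigl(1-\widehat{\mu^{*+}}(\alpha)\bigr),
\qquad \widehat{\mu^{*+}}(\alpha):=\int e^{\alpha t}\mu^{*+}(\mathrm dt),
\]
and this is the Laplace transform of the measure $\mu^{*+}([t,\infty))\,\lambda^{*+}(\mathrm dt)=(\lambda^{-}*\mu^{*+})\vert_{\mathbb R^{*+}}(\mathrm dt)$, \emph{not} of $\lambda^{*+}*\mu^{*+}$, whose transform is the product $\widehat\lambda^{*+}\widehat{\mu^{*+}}$. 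The consistency check you propose settles the matter: the total mass of $\frac{1}{\sigma\sqrt{2\pi}}(\lambda^{-}*\mu^{*+})\vert_{\mathbb R^{*+}}$ equals $\mathbb E[S_{\tau^{*+}}]/(\sigma\sqrt{2\pi})=e^{\kappa}/(2\sqrt{\pi})$, which is exactly the value of $\lim_n n^{3/2}\mathbb P[\tau^{*+}=n]$ forced by Proposition \ref{SECOND=0} and \eqref{eq:ab_eps}, whereas $\lambda^{*+}*\mu^{*+}$ has infinite total mass. (A one-line test: for $\mu=\frac13(\delta_{-1}+\delta_0+\delta_{1})$ one has $\mu^{*+}=\delta_1$, so $\lambda^{*+}*\mu^{*+}$ charges only $\{2,3,\dots\}$, while the limit measure must charge $\{1\}$.) The difference with Theorem \ref{prop:classical_result} is structural: there the exponential identity gives $\phi_\alpha'=B_\alpha'\phi_\alpha$, so $D=\phi_\alpha(1)=\widehat{U^-}$ and the limit is a genuine convolution $\lambda^-*U^-$; here $\psi_\alpha'=\widetilde B_\alpha'(1-\psi_\alpha)$, so $D=1-\widehat{\mu^{*+}}$. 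The remaining ``Wiener--Hopf bookkeeping'' is thus not an identity to be proved but a correction to record in the form of the limit measure; with the limit written as $\frac{1}{\sigma\sqrt{2\pi}}\mu^{*+}([t,\infty))\lambda^{*+}(\mathrm dt)$, your proof is complete.
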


\begin{proof}[Sketch of the proof of Theorem \ref{prop2} in the case {\rm{\bf AA}($\mathbb Z$)}] 
We shall use the following identity, which as \eqref{identity1} is a consequence of the Wiener-Hopf factorization:
\begin{equation}
\label{identity2}
     \psi_\alpha(s):=   \sum_{n\geq 0}s^n \mathbb E[ \tau^{*+}=n; e^{-\alpha S_n}]= 1-\exp - \widetilde B_\alpha(s),\quad 
     \forall s \in [0, 1[,\quad \forall \alpha >0,
\end{equation}
where
\begin{equation*}
     \widetilde B_\alpha(s):=\sum_{n\geq1}\frac{s^n}{n}\mathbb E[ S_n> 0; e^{-\alpha S_n}].
\end{equation*}
Setting $d_n:=  \mathbb E[ \tau^{*+}=n; e^{-\alpha S_n}]$, the same argument as in the proof of Theorem \ref{prop:classical_result} (via Lemma \ref{Bruijn1}) implies that the sequence $(n^{3/2} d_n)_{n \geq 1}$ is bounded (we notice that in Lemma \ref{Bruijn1},  the sequences $(b_n)_{n \geq 0}$ and $(d_n)_{n \geq 0}$  are not necessarily non-negative, so it can be applied in the present situation).
 
Differentiating the two members of \eqref{identity2} with respect to $s$ then yields
\begin{equation*}
\label{identity1''}
     \psi'_\alpha(s)=\sum_{n\geq 1}
ns^{n-1} \mathbb E[ \tau^{*+}=n; e^{-\alpha S_n}]=  (1-\psi_\alpha(s))\sum_{n\geq1}{s^{n-1} }\mathbb E[ S_n>0; e^{-\alpha S_n}],
\end{equation*}
and Theorem \ref{prop2} is thus a consequence of Lemma \ref{Bruijn2}, applied with $c_n:=\mathbb E[ S_n>0; e^{-\alpha S_n}]$, $d_n:= \mathbbm{1}_{\{n=0\}}- \mathbb E[ \tau^{*+}=n; e^{-\alpha S_n}]$ and $a_n:= n \mathbb E[ \tau^{*+}=n; e^{-\alpha S_n}]$.
\end{proof}

According to the previous proof, we also have, as $n \to \infty$, the weak convergences below:
\begin{eqnarray*}
n^{3/2}\mathbb E[\tau^{*+}=n; S_n\in {\rm d}x ]\quad{\longrightarrow}& b^{*+}({\rm d}x)&\hspace{-3mm}:= (1/\sigma \sqrt{2\pi})\lambda^{*+}*\mu^{*+},\\
n^{3/2}\mathbb E[\tau^{+}=n; S_n\in {\rm d}x ]\quad{\longrightarrow}& \phantom{{}^*}b^{+}({\rm d}x)&\hspace{-3mm}:=(1/\sigma \sqrt{2\pi})\lambda^{+}*\mu^{+},\\
n^{3/2}\mathbb E[\tau^{*-}=n; S_n\in {\rm d}x ]\quad {\longrightarrow}& b^{*-}({\rm d}x)&\hspace{-3mm}:= (1/\sigma \sqrt{2\pi})\lambda^{*-}*\mu^{*-},\\
n^{3/2}\mathbb E[\tau^{-}=n; S_n\in {\rm d}x ]\quad {\longrightarrow}& \phantom{{}^*}b^{-}({\rm d}x)&\hspace{-3mm}:=(1/\sigma \sqrt{2\pi})\lambda^{-}*\mu^{-}.
\end{eqnarray*}

\section{Main results}
\label{sec:MR}
\setcounter{equation}{0}

In this section we are first interested in the expectation $\mathbb E[ \tau^{>r}=n; \phi(S_n)]$, for any fixed value of $r>0$. In Theorem \ref{MAIN=} we find its asymptotic behavior as $n\to\infty$, for any continuous function $\phi $ with compact support on $\mathbb R$. Then in Proposition \ref{SECOND=} we take $\phi$ identically equal to $1$, and we prove that the sequence $(n\mathbb P[\tau^{>r}=n])_{n\geq 0}$ is bounded. We then consider the expectation $\mathbb E[ \tau^{>r}>n; \phi(S_n)]$. We first derive its asymptotic behavior as $n\to\infty$, in Theorem \ref{MAIN>}. Finally, in Proposition \ref{SECOND>} we obtain the asymptotics of the probability $\mathbb P[ \tau^{>r}>n]$ for large values of $n$. The theorems stated in Section \ref{sec:MR} concern the hitting time $\tau^{>r}$; similar statements (obtained exactly along the same lines) exist for the hitting times $\tau^{\geq r}$, $\tau^{< r}$ and $\tau^{\leq r}$.

\begin{theorem}
\label{MAIN=}
Assume that the hypotheses {\bf AA}, {\bf C} and {\bf M($2$)} hold. Then for any continuous function $\phi $ with compact support on $]r,\infty[$, one has
\begin{equation*}
     \lim_{n \to \infty} n^{3/2} \mathbb E[ \tau^{>r}=n; \phi(S_n)]= \iint_{\Delta_r} \phi(x+y) U^{*+}({\rm d}x)b^{*+}({\rm d}y)+ \iint_{\Delta_r} \phi(x+y)a^{*+}({\rm d}x)\mu^{*+}({\rm d}y),
\end{equation*}     
where 
$\Delta_r:= \{(x, y) \in \mathbb R^{*+}\times \mathbb R^{*+}: 0\leq x \leq r, x+y>r\}$.
\end{theorem}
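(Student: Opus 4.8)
The plan is to exploit that $\tau^{>r}$ is necessarily a \emph{strict ascending ladder epoch}. Indeed, on the event $\{\tau^{>r}=n\}$ one has $S_n>r\geq S_k$ for every $1\leq k\leq n-1$, so $S_n>\max_{0\leq k<n}S_k$ and therefore $n=T^{*+}_L$, where $L:=\inf\{\ell\geq 1:H_\ell>r\}$ is the first ladder index whose height exceeds $r$. As the heights $(H_\ell)$ are strictly increasing, the requirement that the walk stay $\leq r$ before time $n$ amounts to the single constraint $H_{L-1}\leq r<H_L$. I would thus cut the trajectory at the penultimate ladder epoch $T^{*+}_{L-1}$: setting $k=T^{*+}_{L-1}$, $x=H_{L-1}$, using $T^{*+}_L=T^{*+}_{L-1}+\tau^{*+}_L$ together with the strong Markov property, then summing over $L\geq 1$ (which turns the first part into the ladder renewal measure), one obtains
\begin{equation*}
\mathbb E[\tau^{>r}=n;\phi(S_n)]=\sum_{k=0}^{n}\iint_{\Delta_r}\phi(x+y)\,V_k(\mathrm dx)\,W_{n-k}(\mathrm dy),
\end{equation*}
with $V_k(\mathrm dx):=\sum_{\ell\geq 0}\mathbb P[T^{*+}_\ell=k,\,H_\ell\in\mathrm dx]$ and $W_m(\mathrm dy):=\mathbb P[\tau^{*+}=m,\,S_{\tau^{*+}}\in\mathrm dy]$.

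The second step is to identify both factors and recall their asymptotics. By the time-reversal identity (the laws of $(Y_1,\dots,Y_k)$ and $(Y_k,\dots,Y_1)$ coincide), the event that $k$ is a strict ascending ladder epoch corresponds to $\{S_i>0,\ 1\leq i\leq k\}$, whence $V_k(\mathrm dx)=\mathbb E[\tau^->k;\,S_k\in\mathrm dx]$. Remark \ref{rem-1} then supplies both the total mass $\sum_{k\geq0}V_k=U^{*+}$ and the local limit $k^{3/2}V_k\to a^{*+}$; similarly $\sum_{m\geq0}W_m=\mu^{*+}$, while Theorem \ref{prop2} gives $m^{3/2}W_m\to b^{*+}$. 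As in the proofs of Theorems \ref{prop:classical_result} and \ref{prop2}, I would reduce to test functions $\phi(t)=e^{-\alpha t}$ with $\alpha>0$ (legitimate since the limit measure is carried by $]r,\infty[$) and invoke Lemma \ref{Bruijn1} to obtain the uniform bounds $\sup_k k^{3/2}V_k(C)<\infty$ and $\sup_m m^{3/2}W_m(C)<\infty$ over the compact $C$ governing the integration; note also that $U^{*+}$ is finite on the bounded strip $x\in[0,r]$ of $\Delta_r$, so that both terms of the claimed limit are finite.

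The core is then a \emph{two-term} convolution asymptotic, in the spirit of Lemma \ref{Bruijn2} but with \emph{both} sequences of measures decaying like $n^{-3/2}$ and both summable. Splitting $\sum_{k=0}^n$ at $k\sim n/2$: on the block $k\leq n/2$ one has $n^{3/2}W_{n-k}\to b^{*+}$ for each fixed $k$ (since $n-k\sim n$), which combined with the summability $\sum_k V_k=U^{*+}$ and dominated convergence yields $\iint_{\Delta_r}\phi(x+y)\,U^{*+}(\mathrm dx)\,b^{*+}(\mathrm dy)$; on the symmetric block $k>n/2$ one has $n^{3/2}V_k\to a^{*+}$, which combined with $\sum_m W_m=\mu^{*+}$ yields $\iint_{\Delta_r}\phi(x+y)\,a^{*+}(\mathrm dx)\,\mu^{*+}(\mathrm dy)$. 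I expect the main obstacle to be the rigorous justification of this splitting: one must check that the intermediate range of indices contributes a $o(n^{-3/2})$ term (so that exactly these two boundary contributions survive, with no double counting), and that the spatial integration against $\phi(x+y)\mathbbm{1}_{\Delta_r}$ passes to the limit. Both points rest on the uniform $n^{3/2}$-bounds above and on the boundedness of the region $x\in[0,r]$ together with the compact support of $\phi$, which confine the whole analysis to a fixed compact set.
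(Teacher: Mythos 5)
Your proposal is correct and follows essentially the same route as the paper: the same decomposition over the last strict ascending ladder epoch before level $r$ is crossed, the same time-reversal identity giving $V_k({\rm d}x)=\mathbb P[\tau^->k,\,S_k\in{\rm d}x]$, and the same inputs (Remark \ref{rem-1}, Theorem \ref{prop2}) feeding a two-term convolution asymptotic. The only cosmetic difference is that you re-derive that convolution lemma by splitting at $k\sim n/2$ with dominated convergence, whereas the paper invokes it as Lemma \ref{LP1} (splitting at fixed $i$ and $n-j$ and letting $i,j\to\infty$); your version is equally valid given the uniform $n^{3/2}$-bounds you state.
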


\begin{proof} 
Since $\phi$ has compact support in $]r, \infty[$, one has
\begin{eqnarray*}
\mathbb E [\tau^{>r}=n; \phi(S_n)] &=& \sum_{0\leq k\leq n}
\mathbb E [\exists \ell \geq 0, T^{*+}_\ell=k, S_k\leq r, n-k=\tau^{*+}_{\ell+1}, S_n>r;   \phi(S_n)] \\
&=&\sum_{0\leq k\leq n}\iint_{\Delta_r} \phi(x+y)\mathbb P [\exists \ell \geq 0, T^{*+}_\ell=k, S_k\in {\rm d}x]\times 
\\& & \qquad\qquad\qquad\qquad\qquad\qquad\qquad\times \mathbb P [\tau^{*+}=n-k, S_{n-k}\in {\rm d}y]
\\
&=&\sum_{0\leq k\leq n} I_{n, k} (r, \phi),
\end{eqnarray*}
where we have set 
\begin{equation}
\label{eq:def_I_n_k}
     I_{n, k} (r, \phi):= \iint_{\Delta_r} \phi(x+y)\mathbb P [ \tau^{-}>k, S_k\in {\rm d}x ] \mathbb P[\tau^{*+}=n-k, S_{n-k}\in {\rm d}y ].
\end{equation}     
In Equation \eqref{eq:def_I_n_k} above, we have used the equality $\mathbb P[\exists \ell \geq 0,T_\ell^{*+}=k,S_k\in {\rm d}x ]=\mathbb P[\tau^{-}>k,S_k\in {\rm d}x ]$. It follows by the same arguments as in the proof of Theorem \ref{prop:classical_result} (below Lemma \ref{Bruijn2}). To pursue the proof, we shall use the following elementary result (see \cite[Lemma II.8]{LP1} for the original statement and its proof):

\begin{lemma}
\label{LP1}
Let $(a_n)_{n \geq 0}$ and $(b_n)_{n \geq 0}$ be two sequences of non-negative real numbers such that $\lim_{n \to \infty} n^{3/2} a_n= a \in \mathbb R^{*+}$ and $\lim_{n \to \infty} n^{3/2} b_n= b \in \mathbb R^{*+}$. Then:
\begin{itemize}
\item there exists $C>0$ such that, for any $n \geq 1$ and any $0<i<n-j<n$, 
\begin{equation*}
     n^{3/2}\sum_{i+1\leq k\leq n-j}a_k b_{n-k} \leq C \left({1\over \sqrt{i}}+{1\over \sqrt{j}}\right);
\end{equation*}
\item setting $A:=  \sum_{n\geq 0} a_n$ and $B:=  \sum_{n\geq 0} b_n$, one has 
\begin{equation*}
     \lim_{n \to \infty} n^{3/2} \sum_{k=0}^{n}a_k b_{n-k}= a B+b A.
\end{equation*}
\end{itemize}
\end{lemma}

Since $\phi$ is non-negative with compact support in $]r, \infty[$, there exists a constant $c_\phi>0$ such that $\phi(t) \leq c_\phi e^{-t}$, for all $t \geq 0$. This yields that for any $0<i<n-j<n$,
\begin{equation*}
     \sum_{i+1\leq k\leq n-j} I_{n, k} (r, \phi)  \leq  c_\phi  \sum_{i+1\leq k\leq n-j}a_k b_{n-k},
\end{equation*}
with $a_k:= \mathbb E [ \tau^{-}>k; e^{-S_k} ]$ and $b_k:= \mathbb E [ \tau^{*+}=k;e^{-S_k} ]$. With Lemma \ref{LP1} we deduce that there exists some constant $C>0$ such that
\begin{equation*}
     \sum_{i+1\leq k\leq n-j}  I_{n, k} (r, \phi)\leq    C  \left({1\over \sqrt{i}}+{1\over \sqrt{j}}\right).
\end{equation*}

On the other hand,  for any fixed $k\geq 1$ and $x \in [0,r]$, one has by Theorem \ref{prop2} 
\begin{equation*}
     \lim_{n \to \infty}n^{3/2} \int_{\{y\geq 0\}} \phi(x+y) \mathbb P [ \tau^{*+}=n-k, S_{n-k}\in {\rm d}y ]= \int_{\{y\geq 0\}} \phi(x+y)b^{*+}({\rm d}y).
\end{equation*}
Further, for any $k\geq 1$, the function
\begin{equation*}
     x\mapsto  n^{3/2} \int_{\{y\leq 0\}} \phi(x+y) \mathbb P[ \tau^{*+}=n-k, S_{n-k}\in {\rm d}y ]
\end{equation*}      
is dominated on $[0, r]$ by 
$
     x\mapsto c_\phi (\sup_{n\geq1} n^{3/2}\mathbb E [ \tau^{*+}=n-k;e^{-S_{n-k}}]) e^{-x},
$
which is bounded (by Theorem \ref{prop2}) and so integrable with respect to the measure $\mathbb P[ \tau^{-}>k, S_k\in {\rm d}x]$. The dominated convergence theorem thus yields 
\begin{equation*}
     \lim_{n\to \infty} n^{3/2}  \sum_{0\leq k\leq i}I_{n, k} (r, \phi) =   \sum_{0\leq k\leq i} \iint _{\Delta_r}\phi(x+y) \mathbb P[\tau^{-}>k, S_k\in {\rm d}x ]  b^{*+}({\rm d}y).
\end{equation*}  
The same argument leads to 
\begin{equation*}
     \lim_{n\to \infty} n^{3/2}  \sum_{n-j\leq k\leq n}I_{n, k} (r, \phi) =   \sum_{0\leq k\leq j}\iint _{\Delta_r}\phi(x+y)  a^{*+}({\rm d}x )\mathbb P [ \tau^{*+}=k, S_k\in {\rm d}y].
\end{equation*}  
Letting $i, j \to \infty$ and using the equalities 
\begin{equation*}
     \sum_{k\geq 0}\mathbb E[ \tau^{-}>k;   S_k\in {\rm d}x]= U^{*+}({\rm d}x),\qquad
\sum_{k\geq 0}\mathbb E[ \tau^{*+}=k;   S_k\in {\rm d}y]= \mu^{*+}({\rm d}y),
\end{equation*}  
one concludes.
\end{proof}

\begin{proposition}
\label{SECOND=}
Assume that the hypotheses {\bf AA}, {\bf C} and {\bf M($2$)} hold. Then for any $r\in\mathbb R^{+}$, the sequence $(n\mathbb P[\tau^{>r}=n])_{n\geq 0}$ is bounded.
\end{proposition}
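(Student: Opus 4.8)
The plan is to reuse the last-ladder-epoch decomposition from the proof of Theorem \ref{MAIN=}, now with the test function $\phi\equiv 1$. Decomposing $\{\tau^{>r}=n\}$ according to the last strict ascending ladder epoch $k=T^{*+}_\ell$ at which $S_k\le r$, exactly as in \eqref{eq:def_I_n_k}, gives the exact identity
\begin{equation*}
     \mathbb P[\tau^{>r}=n]=\sum_{k=0}^n\int_{[0,r]}\mathbb P[\tau^->k,S_k\in\mathrm dx]\,\mathbb P[\tau^{*+}=n-k,S_{n-k}>r-x].
\end{equation*}
Since the inner probability is that of a sub-event of $\{\tau^{*+}=n-k\}$, it is bounded above, uniformly in $x$, by $\mathbb P[\tau^{*+}=n-k]$. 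Integrating out $x$, I thereby reduce the statement to a convolution upper bound
\begin{equation*}
     \mathbb P[\tau^{>r}=n]\le\sum_{k=0}^n a_k\,b_{n-k},\qquad a_k:=\mathbb P[\tau^->k,S_k\in[0,r]],\quad b_m:=\mathbb P[\tau^{*+}=m].
\end{equation*}

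The next step is to control the two sequences. For $(b_m)$ one has $\sum_{m\ge0}b_m=\mathbb P[\tau^{*+}<\infty]=1$ and, by Proposition \ref{SECOND=0}, $b_m=O(m^{-3/2})$. For $(a_k)$, Remark \ref{rem-1} gives $\sum_{k\ge0}a_k=U^{*+}([0,r])$, which is finite, being the renewal mass of a bounded interval. The key point to secure is the uniform estimate $a_k=O(k^{-3/2})$. Rather than merely invoke the vague convergence $k^{3/2}\mathbb P[\tau^->k,S_k\in\mathrm dx]\to a^{*+}(\mathrm dx)$ of Remark \ref{rem-1} on the set $[0,r]$, I would argue as in the proof of Theorem \ref{prop:classical_result}: on $\{\tau^->k\}$ one has $S_k>0$, so for any fixed $\alpha>0$ the elementary majorization $\mathbf 1_{]0,r]}(t)\le e^{\alpha r}e^{-\alpha t}$ yields $a_k\le e^{\alpha r}\,\mathbb E[\tau^->k;e^{-\alpha S_k}]$, and the Wiener--Hopf identity analogous to \eqref{identity1} together with Lemma \ref{Bruijn1} shows that $(k^{3/2}\mathbb E[\tau^->k;e^{-\alpha S_k}])_{k\ge1}$ is bounded. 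This gives $a_k\le Ck^{-3/2}$ for all $k\ge1$.

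Finally I would split the convolution at $n/2$. For the terms with $k\le n/2$, bound $b_{n-k}\le\sup_{m\ge n/2}b_m\le C(n/2)^{-3/2}$ and sum $a_k$ over all $k$ using $\sum_k a_k<\infty$; for the terms with $k>n/2$, bound $a_k\le C(n/2)^{-3/2}$ and sum $b_{n-k}\le\sum_m b_m=1$. Both pieces are $O(n^{-3/2})$, so in fact $\mathbb P[\tau^{>r}=n]=O(n^{-3/2})$, which is considerably stronger than the asserted boundedness of $(n\,\mathbb P[\tau^{>r}=n])_{n\ge0}$; the same conclusion can also be read off directly from Lemma \ref{LP1}. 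I expect the only genuine difficulty to be the uniform estimate $a_k=O(k^{-3/2})$: because the test function is no longer compactly supported, the domination $\phi(t)\le c_\phi e^{-t}$ used in Theorem \ref{MAIN=} is unavailable for the second factor, and it is precisely the restriction $S_k\in[0,r]$ at the last ladder epoch, fed into the exponential majorization and the dual of Theorem \ref{prop:classical_result}, that must be made to carry the full $k^{-3/2}$ decay. The replacement of the second factor by $\mathbb P[\tau^{*+}=n-k]$ is harmless exactly because $\sum_m\mathbb P[\tau^{*+}=m]$ converges.
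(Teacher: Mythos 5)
Your proposal is correct and follows essentially the same route as the paper: the same decomposition over the last ascending ladder epoch with $S_k\in[0,r]$, the same convolution bound $\mathbb P[\tau^{>r}=n]\leq\sum_k \mathbb P[\tau^->k,S_k\in[0,r]]\,\mathbb P[\tau^{*+}=n-k]$, and the same $k^{-3/2}$ estimates for both factors combined via (the first part of) Lemma \ref{LP1}. Your extra step justifying the uniform bound $\mathbb P[\tau^->k,S_k\in[0,r]]=O(k^{-3/2})$ by exponential majorization and Lemma \ref{Bruijn1}, rather than by bare appeal to the weak convergence in Remark \ref{rem-1}, is a welcome tightening, and your conclusion $\mathbb P[\tau^{>r}=n]=O(n^{-3/2})$ is indeed what the paper's own argument delivers.
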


\begin{proof}
By the proof of Theorem \ref{MAIN=}, one may decompose $\mathbb P[\tau^{>r}=n]$ as $\sum_{0\leq k\leq n} I_{n,k}( r,1 )$, with $I_{n,k}$ defined in \eqref{eq:def_I_n_k}. One easily obtains that
\begin{equation*}
     I_{n,k}( r,1 )\leq \mathbb P [ \tau^{-}>k, S_k\in [0,r] ] \mathbb P[\tau^{*+}=n-k],
\end{equation*}
$\Delta_r$ being defined as in Theorem \ref{MAIN=}. One concludes by applying Remark \ref{rem-1} (we obtain the estimate $1/k^{3/2}$ for the first probability above), Proposition \ref{SECOND=} (we deduce the estimate $1/(n-k)^{3/2}$ for the second probability) and Lemma \ref{LP1}.
\end{proof}

We now pass to the second part of Section \ref{sec:MR}, which is concerned with the expectation $\mathbb E[ \tau^{>r}>n; \phi(S_n)]$. 
\begin{theorem}
\label{MAIN>}
Assume that the hypotheses {\bf AA}, {\bf C} and {\bf M($2$)} hold. Then for any continuous function $\phi $ with compact support on  $\mathbb R$, one has
\begin{equation*}
     \lim_{n \to \infty} n^{3/2} \mathbb E[ \tau^{>r}>n; \phi(S_n)]= \iint_{D_r} \phi(x+y) U^{*+}({\rm d}x)a^{-}({\rm d}y)+ \iint_{D_r} \phi(x+y)a^{*+}({\rm d}x)U^{-}({\rm d}y),
\end{equation*}     
where 
$D_r:= \{(x, y) \in \mathbb R^2: 0\leq x \leq r, y\leq 0\}=[0,r]\times \mathbb R^{-}$.
\end{theorem}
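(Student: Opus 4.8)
The plan is to follow the three-block scheme of the proof of Theorem~\ref{MAIN=}, replacing the ``hitting exactly'' factor by a ``staying below'' factor and the triangle $\Delta_r$ by the strip $D_r$. The first step is to decompose the event $\{\tau^{>r}>n\}$ according to the last strict ascending ladder epoch before time $n$. Setting $\ell:=\max\{m\geq 0\colon T^{*+}_m\leq n\}$ and $k:=T^{*+}_\ell$, the value $S_k$ is precisely the running maximum $\max_{0\leq j\leq n}S_j$, so that $\{\tau^{>r}>n\}=\{S_k\leq r\}$; moreover $T^{*+}_{\ell+1}>n$ forces the shifted walk $(S_{k+m}-S_k)_{0\leq m\leq n-k}$ to stay non-positive, i.e.\ $\tau^{*+}>n-k$ for this shifted walk. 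Decomposing over the value $k$ of this last ladder epoch and using that the increments after time $k$ are independent of $\mathcal T_k$, together with the duality identity $\mathbb P[\exists \ell\geq 0, T^{*+}_\ell=k, S_k\in{\rm d}x]=\mathbb P[\tau^->k, S_k\in{\rm d}x]$ (the same time-reversal argument as in the proofs of Theorems~\ref{prop:classical_result} and~\ref{MAIN=}), one obtains
\begin{equation*}
     \mathbb E[\tau^{>r}>n;\phi(S_n)]=\sum_{0\leq k\leq n}J_{n,k}(r,\phi),
\end{equation*}
with
\begin{equation*}
     J_{n,k}(r,\phi):=\iint_{D_r}\phi(x+y)\,\mathbb P[\tau^->k, S_k\in{\rm d}x]\,\mathbb P[\tau^{*+}>n-k, S_{n-k}\in{\rm d}y],
\end{equation*}
the constraint $y\leq 0$ being automatic on $\{\tau^{*+}>n-k\}$ and $0\leq x\leq r$ coming from $S_k\leq r$ with $S_k$ a ladder height.

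For the middle block I would fix a compact support $[-M,M]$ of $\phi$ and use the elementary inequality $\phi(t)\leq c_\phi e^{t}$, valid for all $t$ with $c_\phi:=\|\phi\|_\infty e^{M}$ (assuming $\phi\geq 0$, which is harmless). Since $x\leq r$ on $D_r$, this gives $\phi(x+y)\leq c_\phi e^{r}e^{y}$ and hence $J_{n,k}(r,\phi)\leq c_\phi e^{r}\,\tilde a_k\,\tilde b_{n-k}$, with $\tilde a_k:=\mathbb P[\tau^->k, S_k\in[0,r]]$ and $\tilde b_m:=\mathbb E[\tau^{*+}>m; e^{S_m}]$. Both sequences are of order $n^{-3/2}$: for $\tilde a_k$ this comes from the weak convergence $n^{3/2}\mathbb P[\tau^->n, S_n\in{\rm d}x]\to a^{*+}({\rm d}x)$ of Remark~\ref{rem-1} applied to the \emph{bounded} set $[0,r]$, and for $\tilde b_m$ it is exactly Theorem~\ref{prop:classical_result} with the test function $t\mapsto e^{t}$. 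Lemma~\ref{LP1} then provides, for $0<i<n-j<n$, the uniform bound $\sum_{i+1\leq k\leq n-j}J_{n,k}(r,\phi)\leq C(1/\sqrt i+1/\sqrt j)$.

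For the head block $\sum_{0\leq k\leq i}$ and the tail block $\sum_{n-j\leq k\leq n}$, which are finite sums for fixed $i,j$, I would pass to the limit term by term. For fixed $k$, the function $\phi(x+\cdot)$ is continuous with compact support, so Theorem~\ref{prop:classical_result} gives directly $n^{3/2}\int\phi(x+y)\,\mathbb P[\tau^{*+}>n-k, S_{n-k}\in{\rm d}y]\to\int\phi(x+y)\,a^-({\rm d}y)$, and dominated convergence in $x$ (with dominating constant $c_\phi e^{r}\sup_m m^{3/2}\tilde b_m$ against the finite measure $\mathbb P[\tau^->k, S_k\in{\rm d}x]$) yields the limit of the head block. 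The tail block is handled symmetrically using the weak convergence $n^{3/2}\mathbb P[\tau^->n, S_n\in{\rm d}x]\to a^{*+}({\rm d}x)$ of Remark~\ref{rem-1}. Letting first $n\to\infty$ and then $i,j\to\infty$, and inserting the summation identities $\sum_{k\geq 0}\mathbb P[\tau^->k, S_k\in{\rm d}x]=U^{*+}({\rm d}x)$ and $\sum_{m\geq 0}\mathbb P[\tau^{*+}>m, S_m\in{\rm d}y]=U^-({\rm d}y)$, produces the two announced terms.

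The genuinely new difficulty compared with Theorem~\ref{MAIN=} is that the variable $y$ now ranges over the \emph{unbounded} half-line $\mathbb R^-$, against which the descending renewal measure $U^-$ (of linear growth at $-\infty$) must be integrated; finiteness of the limit $\iint_{D_r}\phi(x+y)\,U^{*+}({\rm d}x)\,a^-({\rm d}y)$, as well as of all the dominating integrals, therefore rests on the exponential weight $e^{y}$ provided by the bound $\phi(t)\leq c_\phi e^{t}$. Checking that this single weight simultaneously controls the middle block (through $\tilde b_m$ of order $m^{-3/2}$) and dominates the head and tail blocks is the step deserving most care; the combinatorial decomposition and the two limit passages are then routine adaptations of the proof of Theorem~\ref{MAIN=}.
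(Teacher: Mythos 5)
Your proposal is correct and follows essentially the same route as the paper: the paper's proof of Theorem \ref{MAIN>} consists precisely of the decomposition \eqref{eq:decomposition_MAIN>} (your $J_{n,k}(r,\phi)$, obtained from the last ascending ladder epoch and the duality $\mathbb P[\exists \ell, T^{*+}_\ell=k, S_k\in{\rm d}x]=\mathbb P[\tau^->k, S_k\in{\rm d}x]$) followed by the three-block argument of Theorem \ref{MAIN=}, with Theorem \ref{prop:classical_result} and Remark \ref{rem-1} replacing Theorem \ref{prop2}. Your additional observations (the sign flip to the weight $e^{t}$, needed because $y$ now ranges over $\mathbb R^-$, and the resulting integrability against $U^-$ and $a^-$) correctly supply the details the paper leaves implicit.
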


We do not write the proof of Theorem \ref{MAIN>} in full details, for the three following reasons. First, it is similar to that of Theorem \ref{MAIN=}. We just emphasize the unique but crucial difference in the decomposition of the expectation $\mathbb E [\tau^{>r}>n; \phi(S_n)]$, namely:
\begin{equation}
\label{eq:decomposition_MAIN>}
     \mathbb E [\tau^{>r}>n; \phi(S_n)]=\sum_{0\leq k\leq n} \iint_{D_r} \phi(x+y)\mathbb P[\tau^{-}>k, S_k\in {\rm d}x] \mathbb P[\tau^{*+}>n-k, S_{n-k}\in {\rm d}y ].
\end{equation}
The second reason is that Theorem \ref{MAIN>} is equivalent to \cite[Theorem II.7]{LP1}. Indeed, the event $[\tau^{>r}>n]$ can be written as $[M_n\leq r]$, where $M_n = \max(0,S_1,\ldots ,S_n)$. Likewise, Proposition \ref{SECOND>} below on the asymptotics of $\mathbb P[\tau^{>r}>n]$ can be found in \cite{LP1}. Finally, Theorem \ref{MAIN>} is also proved in the recent article \cite{D}, see in particular Proposition 11.

\begin{proposition}
\label{SECOND>}
Assume that the hypotheses {\bf AA}, {\bf C} and {\bf M($2$)} hold. One has
\begin{equation}
     \mathbb P[\tau^{>r}>n] = \frac{\exp \kappa}{\sqrt{\pi n}} U^{*+}([0,r]) (1+o(1)),\quad n\to\infty.
\end{equation}
\end{proposition}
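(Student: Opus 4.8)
The plan is to specialize the decomposition \eqref{eq:decomposition_MAIN>} from the proof of Theorem \ref{MAIN>} to the case $\phi \equiv 1$ and then to invoke the Tauberian-type Lemma \ref{Bruijn2}. Although Theorem \ref{MAIN>} itself requires $\phi$ to have compact support, the identity \eqref{eq:decomposition_MAIN>} is a purely probabilistic decomposition according to the last ascending ladder epoch $T^{*+}_\ell = k$ before time $n$, and so it remains valid for indicator functions and for the constant function. Taking $\phi \equiv 1$ and integrating the two marginals over $D_r = [0,r]\times\mathbb R^-$, one observes that on the event $\{\tau^{*+}>n-k\}$ one automatically has $S_{n-k}\leq 0$, so integrating $y$ over $\mathbb R^-$ returns the full probability $\mathbb P[\tau^{*+}>n-k]$. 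This yields the convolution identity
\begin{equation*}
     \mathbb P[\tau^{>r}>n] = \sum_{0\leq k\leq n} \mathbb P[\tau^{-}>k, S_k\in[0,r]]\,\mathbb P[\tau^{*+}>n-k].
\end{equation*}

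Next I would collect the asymptotic inputs. From \eqref{eq:ab_eps} one has $\mathbb P[\tau^{*+}>n] = (\exp\kappa/\sqrt{\pi n})(1+o(1))$, so the sequence $c_n := \mathbb P[\tau^{*+}>n]$ satisfies $\sqrt{n}\,c_n \to \exp\kappa/\sqrt{\pi}$. On the other hand, Remark \ref{rem-1} gives both the summation formula $\sum_{k\geq 0}\mathbb P[\tau^{-}>k, S_k\in[0,r]] = U^{*+}([0,r])$ and the weak convergence $n^{3/2}\mathbb P[\tau^{-}>n, S_n\in[0,r]]\to a^{*+}([0,r])$. In particular the sequence $d_k := \mathbb P[\tau^{-}>k, S_k\in[0,r]]$ is summable with sum $U^{*+}([0,r])<\infty$ (local finiteness of the renewal measure of the positive ladder heights), and $(n d_n)_{n\geq 0}$ is bounded since $d_n = O(n^{-3/2})$.

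These are precisely the hypotheses of Lemma \ref{Bruijn2}, with $c := \exp\kappa/\sqrt{\pi}$ and $D := U^{*+}([0,r])$. Applying it to $\sum_{0\leq k\leq n-1} d_k c_{n-k}$ gives $\sqrt{n}\sum_{0\leq k\leq n-1} d_k c_{n-k}\to cD$; the omitted boundary term $k=n$, equal to $d_n c_0 = d_n = O(n^{-3/2})$, contributes $O(n^{-1})$ after multiplication by $\sqrt{n}$ and is therefore negligible. Hence $\sqrt{n}\,\mathbb P[\tau^{>r}>n]\to (\exp\kappa/\sqrt{\pi})\,U^{*+}([0,r])$, which is exactly the claimed estimate.

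The argument is essentially routine once the convolution identity is in place; the only step requiring a little care is the reduction to $\phi\equiv 1$, that is, checking that the marginal integration over $D_r$ is legitimate and that the inner $y$-integral reproduces $\mathbb P[\tau^{*+}>n-k]$ thanks to $S_{n-k}\leq 0$ on $\{\tau^{*+}>n-k\}$. I do not expect a genuine obstacle here: the real content is carried by the already-established asymptotics \eqref{eq:ab_eps} and the renewal identities of Remark \ref{rem-1}, which this proof simply combines through Lemma \ref{Bruijn2}.
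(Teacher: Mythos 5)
Your proof is correct and follows essentially the same route as the paper's: both specialize the decomposition \eqref{eq:decomposition_MAIN>} to $\phi\equiv 1$ to get the convolution identity $\mathbb P[\tau^{>r}>n]=\sum_{0\leq k\leq n}\mathbb P[\tau^{-}>k, S_k\in[0,r]]\,\mathbb P[\tau^{*+}>n-k]$, and then combine \eqref{eq:ab_eps} with the two facts from Remark \ref{rem-1} exactly as in Lemma \ref{Bruijn2}. Your explicit invocation of Lemma \ref{Bruijn2} and the treatment of the boundary term $k=n$ only make explicit what the paper leaves implicit.
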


\begin{proof}
By \eqref{eq:decomposition_MAIN>}, the probability $\mathbb P[\tau^{>r}>n]$ may be decomposed as $\sum_{0\leq k\leq n} J_{n,k}( r    )$, with
\begin{align*}
     J_{n,k}( r )&= \iint_{D_r}\mathbb P[\tau^{-}>k, S_k\in {\rm d}x] \mathbb P[\tau^{*+}>n-k, S_{n-k}\in {\rm d}y ]\\
     & =\mathbb P[\tau^{-}>k, S_k\in [0,r]] \mathbb P[\tau^{*+}>n-k],
\end{align*}
where the domain $D_r$ is defined in Theorem \ref{MAIN>}. One concludes, using the following three facts. Firstly, by Remark \ref{rem-1}, one has $n^{3/2}\mathbb P[\tau^{-}>n, S_n\in[0,r]]\to a^{*+}([0,r])$ as ${n\to\infty}$. Secondly, by Equation \eqref{eq:ab_eps}, one has $\sqrt{n} \mathbb P[\tau^{*+}>n]\to e^\kappa/\sqrt{\pi}$ as ${n\to\infty}$. Thirdly, one has $\sum_{n\geq 0}\mathbb P[\tau^{-}>n,S_n\in[0,r]] = U^{*+}([0,r])$, also thanks to Remark \ref{rem-1}. 
\end{proof}

\begin{rem}
\label{rem-2}
Theorem \ref{MAIN=} (for which $r>0$) formally implies Theorem \ref{prop2} ($r=0$). To see this, it is enough to check that for $r=0$, the constant in the asymptotics of $\mathbb E[ \tau^{>r}=n; \phi(S_n)]$ coincides with the one in the asymptotics of $\mathbb E[ \tau^{*+}=n; \phi(S_n)]$. To that purpose, we first notice that for $r=0$, the domain $\Delta_r$ degenerates in $\{0\}\times \mathbb R^{*+}$. Furthermore, $U^{*+}(0)=1$ and $a^{*+}(0)=0$. Accordingly,
\begin{equation*}
     \iint_{\Delta_r} \phi(x+y) U^{*+}({\rm d}x)a^{-}({\rm d}y) +\iint_{\Delta_r} \phi(x+y)a^{*+}({\rm d}x)U^{-}({\rm d}y)= \int_{\mathbb R^{*+}} \phi(y)b^{*+}({\rm d}y).
\end{equation*}
In the right-hand side of the equation above, $\mathbb R^{*+}$ can be replaced by $\mathbb R^{+}$, as $b^{*+}(0)=0$. We then obtain the right constant in Theorem \ref{prop2}. Likewise, we could see that Theorem \ref{MAIN>} formally implies Theorem \ref{prop:classical_result}.
\end{rem}

%\begin{theorem}[\cite{LP1}]
%Assume that the hypotheses {\bf AA}, {\bf C} and {\bf M($2$)} hold. Then for any continuous function $\psi$ with compact support on $\mathbb R^+\times \mathbb R^+$, we have
%\begin{align*}
%     \lim_{n\to\infty}n^{3/2}\mathbb E[\psi(M_n,M_n-S_n)]&=\frac{1}{\sigma \sqrt{2\pi}} \iint_{\mathbb R^+\times \mathbb R^+}\psi(x,y)\lambda^{+}*U_{T^+}({\rm d}x)\overline{U}_{T^{'-}}({\rm d}y)\\
%     &+\frac{1}{\sigma \sqrt{2\pi}} \iint_{\mathbb R^+\times \mathbb R^+}\psi(x,y)U_{T^+}({\rm d}x)\lambda^{+}*\overline{U}_{T^{'-}}({\rm d}y).
%\end{align*}
%\end{theorem}

\section{Applications to random walks on $\mathbb R^{+}$ with non-elastic reflection at $0$}
\label{sec:RWabsorption}
\setcounter{equation}{0}

In this section we consider a sequence $(Y_i)_{i\geq 1}$ of i.i.d.\ random variables defined on a probability space $(\Omega, \mathcal T, \mathbb P)$, and we define the random walk $(X_n)_{n \geq 0}$ on $\mathbb R^+$ with non-elastic reflection at $0$ (or absorbed at $0$) recursively, as follows:
\begin{equation*}
     X_{n+1}:=\max(X_n+Y_{n+1}, 0), \quad \forall n\geq 0,
\end{equation*}
where $X_0$ is a given $\mathbb R^+$-valued random variable. The process $(X_n)_{n \geq 0}$ is a Markov chain on $\mathbb R^+$. We obviously have that for all $n\geq 0$, $X_{n+1} = f_{Y_{n+1}}(X_n)$, with
\begin{equation*}
     f_y(x) := \max (x+y,0),\quad \forall x,y\in\mathbb R.
\end{equation*}
The chain $(X_n)_{n \geq 0}$ is thus a random dynamical system; we refer the reader to \cite{PW1,PW2} for precise notions and for a complete description of recurrence properties of such Markov processes.

The profound difference between this chain and the classical random walk $(S_n)_{n\geq 0}$ on $\mathbb Z$ or $\mathbb R$ is due to the reflection at $0$. We therefore introduce the successive absorption times $({\bf a}_\ell)_{\ell\geq 0}$: 
\begin{align*}
{\bf a_0}&:= 0, \\
{\bf a}={\bf a}_1&:= \inf\{n>0: X_0+ Y_1+\cdots+Y_n<0\},\\
{\bf a}_{\ell}&:= \inf\{n>{\bf a}_{\ell-1}:  Y_{{\bf a}_{\ell-1}+1}+\cdots+Y_{{\bf a}_{\ell-1}+n}<0\},\quad \forall \ell\geq 2.
\end{align*}
Let us assume the first moment condition {\bf M($1$)} (i.e., that $\mathbb E[\vert Y_1\vert] <\infty$). If in addition $\mathbb E[ Y_1 ]>0$, the absorption times are not $\mathbb P$-a.s.\ finite, and in this case, the chain is transient. Indeed, one has $X_n\geq X_0+Y_1+\cdots +Y_n$, with $Y_1+\cdots +Y_n\to\infty$, $\mathbb P$-a.s. If $\mathbb E[ Y_1 ]\leq 0$, all the ${\bf a}_\ell$, $\ell\geq 1$, are $\mathbb P$-a.s.\ finite, and the equality $X_{{\bf a}_\ell}\mathbbm{1}_{\{{\bf a}_\ell<\infty\}}=0$, $\mathbb P$-a.s., readily implies that $(X_n)_{n\geq 0}$ visits $0$ infinitely often. On the event $[X_0 = 0]$, the first return time of $(X_n)_{n\geq 0}$ at the origin equals $\tau^{-}$. In the subcase $\mathbb E[ Y_1 ]=0$, it has infinite expectation, and $(X_n)_{n\geq 0}$ is null recurrent. If $\mathbb E[ Y_1 ]< 0$, one has $\mathbb E[\tau^{-}]<\infty$, and the chain $(X_n)_{n\geq 0}$ is positive recurrent.
In particular, when $\mathbb E[ Y_1 ]\geq 0$, for any $x \geq 0$ and any continuous function $\phi $ with compact support included in $\mathbb R^+$, one has
\begin{equation}
\label{eq:before_speed_convergence}
     \lim_{n \to \infty} \mathbb E [\phi(X_n)|X_0=x]=0.
\end{equation}     
We shall here focus our attention on the speed of convergence in \eqref{eq:before_speed_convergence}, by proving the following result:
\begin{theorem} 
\label{thm:speed_convergence}
Assume that the hypotheses {\bf AA}, {\bf C} and {\bf M($2$)} are satisfied. Then, for any $x \geq 0$ and any continuous function $\phi $ with compact support on  $\mathbb R^+$, one has
\begin{equation*}
     \lim_{n \to \infty} \sqrt{n}\mathbb E[  \phi(X_n)|X_0=x]= \frac{\widetilde\kappa}{ \sqrt{ \pi}} \int_{\mathbb R^+} \phi(t) U^+({\rm d}t),
\end{equation*}
where\footnote{We refer to Footnote \ref{footnote:Rim} for another expression of $\widetilde \kappa$.}
\begin{equation}
\label{eq:tildekappa}
     \widetilde \kappa:=\exp\left( \sum_{n\geq1} {\mathbb P[S_n<0]-1/2\over n}\right).
\end{equation}

If $\mathbb E[Y_1]>0$ and if furthermore ${\bf AA}$ and {\bf M($\exp^-$)} hold,\footnote{In fact, it would be sufficient to assume that $\mathbb E[e^{\gamma Y_1}]<\infty$ for $\gamma$ belonging to some interval $[a, 0]$, if $[a, 0]$ is such that the convex function $\gamma\mapsto \mathbb E[e^{\gamma Y_1}]$ reaches its minimum at a point $\gamma_0\in ]a, 0[$.} there exists $\rho= \rho(\mu)\in ]0, 1[$ and a positive constant $C(\phi)$ (which can be computed explicitly) such that
\begin{equation*}
     \lim_{n \to \infty} {n^{3/2}\over \rho^n} \mathbb E[  \phi(X_n)|X_0=x]= C(\phi).
     \end{equation*}
\end{theorem}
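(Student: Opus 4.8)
The plan is to exploit the renewal structure of $(X_n)_{n\geq 0}$ coming from its successive returns to $0$, reducing the whole statement to the asymptotics of a convolution $\sum_k u^x_k\,w_{n-k}$, and then to analyse this convolution separately in the null-recurrent and transient regimes. The crucial observation, already implicit in Section \ref{sec:RWabsorption}, is that between two consecutive absorptions the chain coincides with a free random walk killed when it first becomes negative: if the last absorption before time $n$ takes place at time $k$, then $X_n=Y_{k+1}+\cdots+Y_n$, the shifted partial sums stay $\geq 0$ (i.e.\ $\tau^{*-}>n-k$ for that walk), and $X_n=S_{n-k}$ in law. Writing $u^x_k:=\mathbb P[\exists\,\ell\geq 1:\mathbf a_\ell=k\mid X_0=x]$ for the probability of an absorption at time $k$, the independence of increments and the strong Markov property give the exact decomposition
\begin{equation}
\label{eq:reset_decomposition}
\mathbb E[\phi(X_n)\mid X_0=x]=\mathbb E[\tau^{<-x}>n;\phi(x+S_n)]+\sum_{k=1}^{n}u^x_k\,w_{n-k},\qquad w_m:=\mathbb E[\tau^{*-}>m;\phi(S_m)],
\end{equation}
where the first term accounts for "no absorption yet". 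The generating function of $(u^x_k)$ factorises as $\widehat U^x(s)=F_x(s)/(1-G(s))$, with $F_x(s)=\mathbb E[s^{\tau^{<-x}}]$ and $G(s)=\mathbb E[s^{\tau^{*-}}]$, the latter governing the i.i.d.\ inter-absorption times.

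In the centred case $\mathbb E[Y_1]=0$, Remark \ref{rem-1} gives at once $\sum_{m\geq0}w_m=U^+(\phi)$ and $w_m\sim a^+(\phi)\,m^{-3/2}$, so $(w_m)$ is summable. For the renewal factor, the dual of \eqref{eq:ab_eps} reads $\mathbb P[\tau^{*-}>k]\sim(\widetilde\kappa\sqrt{\pi k})^{-1}$: indeed the Wiener--Hopf identity yields $\sum_k s^k\mathbb P[\tau^{*-}>k]=\exp\sum_k\frac{s^k}{k}\mathbb P[S_k\geq0]$, whose tail constant equals $\exp\sum_k\frac1k(\mathbb P[S_k\geq0]-\frac12)=\widetilde\kappa^{-1}$ by \eqref{eq:tildekappa} — this is precisely why $\widetilde\kappa$, and not its reciprocal, will surface. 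Hence $1-G(s)\sim\widetilde\kappa^{-1}(1-s)^{1/2}$ and, since $F_x(1)=\mathbb P[\tau^{<-x}<\infty]=1$, one gets $\widehat U^x(s)\sim\widetilde\kappa\,(1-s)^{-1/2}$. The $\tau^{*-}$-analogue of Proposition \ref{SECOND=0} provides the local estimate $\mathbb P[\tau^{*-}=k]\sim c\,k^{-3/2}$, i.e.\ the inter-absorption law is regularly varying of index $-3/2$; this is exactly the condition under which the strong renewal theorem holds at the critical index $\alpha=\tfrac12$, and it yields $u^x_k\sim\frac{\widetilde\kappa}{\sqrt\pi}\,k^{-1/2}$. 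Splitting the convolution $\sum_k u^x_k w_{n-k}$ at $n/2$ (using that $u^x_{n-m}/u^x_n\to1$ and that $(w_m)$ is summable with tail $O(m^{-3/2})$) shows it is asymptotic to $u^x_n\sum_m w_m$, while the first term of \eqref{eq:reset_decomposition} is $O(n^{-3/2})=o(n^{-1/2})$; together this gives $\sqrt n\,\mathbb E[\phi(X_n)\mid X_0=x]\to\frac{\widetilde\kappa}{\sqrt\pi}\,U^+(\phi)$.

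For the transient regime $\mathbb E[Y_1]>0$, the plan is to remove the drift by an Esscher transform. Let $\gamma_0\in\,]a,0[$ be the interior minimiser of the convex map $\gamma\mapsto\mathbb E[e^{\gamma Y_1}]$ and $\rho:=\mathbb E[e^{\gamma_0 Y_1}]\in\,]0,1[$; the tilted law $\widetilde\mu(\mathrm dy):=\rho^{-1}e^{\gamma_0 y}\mu(\mathrm dy)$ is centred (as $\gamma_0$ is a minimiser), and $\mathbb E[\,\cdot\,]=\rho^n\widetilde{\mathbb E}[\,\cdot\,e^{-\gamma_0 S_n}]$ for functionals of $(Y_1,\ldots,Y_n)$. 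With $\psi(t):=\phi(t)e^{-\gamma_0 t}$ (again continuous with compact support), tilting turns each ingredient of \eqref{eq:reset_decomposition} into a centred quantity times a power of $\rho$: Remark \ref{rem-1} for $\widetilde{\mathbb P}$ gives $w_m\sim c_w\,\rho^m m^{-3/2}$, and the tilted analogues of Proposition \ref{SECOND=0} and Theorem \ref{prop2} give $\mathbb P[\tau^{*-}=k]\sim c_f\,\rho^k k^{-3/2}$. Thus the inter-absorption law is $\rho$-subexponential of index $-3/2$; crucially $G(1/\rho)=\widetilde{\mathbb E}[e^{-\gamma_0 S_{\tau^{*-}}}]<1$, because $S_{\tau^{*-}}<0$ a.s.\ and $-\gamma_0>0$, so $1-G$ has no zero on $]0,1/\rho]$ and $\widehat U^x(s)=F_x(s)/(1-G(s))$ has a \emph{square-root} (not polar) singularity at its dominant point $s=1/\rho$. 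A Darboux/transfer theorem then gives $u^x_k\sim c_u\,\rho^k k^{-3/2}$. Finally, the convolution of two sequences of the form $\rho^k k^{-3/2}$ (together with the likewise-behaved no-absorption term) obeys the single-big-jump asymptotics $\sum_k u^x_k w_{n-k}\sim\rho^n n^{-3/2}\bigl(c_w\sum_k u^x_k\rho^{-k}+c_u\sum_m w_m\rho^{-m}\bigr)$, furnishing $n^{3/2}\rho^{-n}\mathbb E[\phi(X_n)\mid X_0=x]\to C(\phi)$ with explicit $C(\phi)$.

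The main obstacle, in both regimes, is passing from the behaviour of $\widehat U^x$ near its dominant singularity to the \emph{pointwise} asymptotics of the renewal mass function $(u^x_k)$: the monotonicity argument used for \eqref{eq:ab_eps} is unavailable here, since $(u^x_k)$ need not be monotone. In the centred case this is the delicate strong renewal theorem at the boundary index $\alpha=\tfrac12$, which forces one to first secure the regular variation $\mathbb P[\tau^{*-}=k]\sim c\,k^{-3/2}$ (the $\tau^{*-}$-version of Proposition \ref{SECOND=0}). In the transient case it is the verification that the dominant singularity is the branch point $s=1/\rho$ rather than a pole, i.e.\ the inequality $G(1/\rho)<1$, which is what places the problem in the subexponential rather than the Cramér regime and produces the polynomial correction $n^{-3/2}$.
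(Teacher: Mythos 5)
Your renewal decomposition over the last absorption time before $n$ is exactly the one the paper uses (for $x=0$ it is identity \eqref{partant de 0}), and the constants you announce are correct, including the identification of $\widetilde\kappa$ through $1-G(s)\sim\widetilde\kappa^{-1}(1-s)^{1/2}$ and the inequality $G(1/\rho)<1$ in the transient regime. The divergence --- and the gap --- lies in how you extract the \emph{pointwise} asymptotics of the absorption renewal mass function $u^x_k$. In the centred case you appeal to a strong renewal theorem at the critical index $\alpha=\tfrac12$, asserting that the local estimate $\mathbb P[\tau^{*-}=k]\sim c\,k^{-3/2}$ ``is exactly the condition under which'' it holds. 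This is the hardest point of your route and you do not prove it: at $\alpha=\tfrac12$ the strong renewal theorem fails under regular variation of the tail alone (additional conditions are required), and although a local estimate on $\mathbb P[\tau^{*-}=k]$ does suffice, deducing $u^x_k\sim\frac{\widetilde\kappa}{\sqrt\pi}\,k^{-1/2}$ from it is itself a nontrivial renewal-theoretic statement, not a quotable one-liner. The same issue recurs in the transient case, where your Darboux/transfer step needs analytic information on $G$ near $s=1/\rho$ (an expansion of $1-G$ there, or $\Delta$-analyticity) that you have not established; knowing only $G(1/\rho)<1$ is not enough to read off coefficient asymptotics of $F_x/(1-G)$.

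The paper dissolves this difficulty with a duality identity that your argument misses: reversing the increments, the probability that $k$ is an absorption epoch for the chain started at $0$ equals
\begin{equation*}
\mathbb P[\exists\,\ell\geq 0:\,T^{*-}_\ell=k]
=\mathbb P[S_k<S_{k-1},\ldots,S_k<S_1,\,S_k<0]
=\mathbb P[\tau^{+}>k],
\end{equation*}
so the renewal mass function is a first-passage tail probability. It is therefore monotone, and its asymptotics $\sim\widetilde\kappa/\sqrt{\pi k}$ follow from the same Tauberian argument as \eqref{eq:ab_eps}; no strong renewal theorem is needed. The convolution with the summable sequence $(w_m)$ is then Lemma \ref{Bruijn2} verbatim, and a general starting point $x$ is handled by conditioning on the \emph{first} absorption only and convolving the probability distribution of $\tau^{<-x}$ with the $x=0$ result (again Lemma \ref{Bruijn2}, using Proposition \ref{SECOND=} with $\tau^{<-x}$ in place of $\tau^{>r}$ for the boundedness of $n\,\mathbb P[\tau^{<-x}=n]$). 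In the transient case the same duality, combined with the Esscher tilt you correctly describe, turns both factors of the convolution into explicitly estimated sequences of order $\rho^k k^{-3/2}$, and Lemma \ref{LP1} gives the conclusion directly. If you replace your strong-renewal/Darboux step by this duality, your proof closes and essentially coincides with the paper's.
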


\begin{proof}
%Notice that $X_{{\bf a}_\ell}\mathbbm{1}_{\{{\bf a}_\ell<\infty\}}=0$, $\mathbb P$-a.s., which explains why the definition  of ${\bf a_1}$ differs from the one of ${\bf a}_\ell$, $\ell\geq 2$.\footnote{\bf Ca vaudrait peut-etre le coup de mettre ca en footnote.}
We first assume that $X_0=0$. On the event $[T_\ell^{*-}\leq n<T_{\ell+1}^{*-}]$, one has that $X_n= S_n-S_{T_\ell^{*-}}$. It readily follows that
 \begin{eqnarray*}
 \mathbb E[ \phi(X_n)\hspace{-4mm}&|&\hspace{-4mm}X_0=0]\\
 &=&\hspace{-2mm} \sum_{\ell\geq 0} \mathbb E[{\bf a}_\ell\leq n<{\bf a}_{\ell+1}; \phi(X_n)|X_0=0]\\
 &=&\hspace{-2mm} \sum_{\ell\geq 0}  \mathbb E[T_\ell^{*-}\leq n<T_{\ell+1}^{*-}; \phi(X_n)|X_0=0]\\
 &=&\hspace{-2mm} \sum_{\ell\geq 0}  \mathbb E[T_\ell^{*-}\leq n<T_{\ell+1}^{*-}; \phi(S_n-S_{T_\ell^{*-}})]\\
 &=&\hspace{-2mm} \sum_{\ell\geq 0} \sum_{0\leq k\leq n} \mathbb E[T_\ell^{*-}=k, Y_{k+1}\geq 0, \ldots,  Y_{k+1}+\cdots +Y_{n}\geq 0; \phi(Y_{k+1}+\cdots +Y_{n})]\\
 &=&\hspace{-2mm} \sum_{0\leq k\leq n} \left( \sum_{\ell \geq0}\mathbb P[T_\ell^{*-}=k]\right)\mathbb E[ Y_{k+1}\geq 0, \ldots,  Y_{k+1}+\cdots +Y_{n}\geq 0; \phi(Y_{k+1}+\cdots +Y_{n})].
\end{eqnarray*}
Using the fact that for any $k\geq 0$, the events $[T_\ell^{*-}=k]$, $\ell\geq 0$, are pairwise disjoint together with the fact that $\mathcal L (Y_1, \ldots, Y_n)=\mathcal L(Y_n, \ldots, Y_1)$, one gets
\begin{equation*}
     \sum_{\ell\geq 0}\mathbb P[T_\ell^{*-}=k]=\mathbb P[\exists \ell\geq 0,T_\ell^{*-}=k]= \mathbb P[S_k<0, S_k<S_1, \ldots, S_k<S_{k-1}]=\mathbb P[\tau^{+}>k],
\end{equation*}
which in turn implies that
\begin{equation}
\label{partant de 0}
     \mathbb E[ \phi(X_n)|X_0=0]=\sum_{0\leq k\leq n}  \mathbb P[\tau^{+}>k] \mathbb E[ \tau^{*-}>n-k; \phi(S_{n-k})].
\end{equation}
  
The situation is more complicated when the starting point is $x \geq 0$. In that case, one has the decomposition
\begin{equation}
\label{partant de x}
     \mathbb E [ \phi(X_n)|X_0=x]=E_1(x, n)+E_2(x, n),
\end{equation} 
with $E_1(x, n):= \mathbb E[ {\bf a} >n; \phi(X_n)|X_0=x]$ and $E_2(x, n):= \mathbb E[ {\bf a} \leq n; \phi(X_n)|X_0=x]$. From the definition of ${\bf a}$, one gets $E_1(x, n)=\mathbb E[ \tau^{<-x}>n; \phi(x+S_n)]$. Similarly, by the Markov property and the fact that  $X_{\bf a}=0$, $\mathbb P$-a.s., one may write
\begin{equation*}
      E_2(x, n)= \sum_{0\leq \ell\leq n}   \mathbb P[\tau^{<-x}=\ell] \mathbb E[\phi(X_{n-\ell})|X_0=0].
 \end{equation*}
 
\subsection*{The centered case}
 
We first assume that  hypotheses {\bf AA} and {\bf M($2$)} are satisfied and that the $(Y_i)_{i\geq 1}$ are centered (hypothesis $\bf C$).  In this case, by fluctuation theory of centered random walks, one gets $\mathbb P[{\bf a}_\ell<\infty]=1$ for any $\ell \geq 0$ and any initial distribution $\mathcal L(X_0)$.
 
We first consider the case when $X_0=0$ and we use the identity \eqref{partant de 0}. By \cite[Theorem II.2]{LP1} (see also how \eqref{eq:ab_eps} is obtained), one gets
\begin{equation*}
    \lim_{n\to \infty} \sqrt{n}\mathbb P[\tau^{+}>n]={\widetilde\kappa\over \sqrt{\pi}},
\end{equation*}
with $\widetilde \kappa$ defined in \eqref{eq:tildekappa}. On the other hand, by Remark \ref{rem-1} in Section \ref{sec:FR} we know that%\footnote{\bf Explain notation for $a^+(\phi)$?}
\begin{equation*} 
     \lim_{n\to \infty}n^{3/2}\mathbb E[ \tau^{*-}>n; \phi(S_{n})] =a^+(\phi).
\end{equation*}
We conclude, setting $c_n:=\mathbb P[\tau^+>n$, $d_n:=\mathbb E[\tau^{*-}>n; \phi(S_n)]$, thus $c :=\widetilde \kappa/{\sqrt{\pi}}$ and $D:= \sum_{n\geq 0}\mathbb E[ \tau^{*-}>n; \phi(S_{n})]=U^+(\phi)$, in Lemma \ref{Bruijn2}.
%\begin{lemma}[\cite{I}]
%Let $(a_n)_{n \geq 0}$ and $(b_n)_{n \geq 0}$ be two sequences of non-negative real numbers such that 
%\begin{itemize}
%\item $ \lim_{n \to \infty} \sqrt{n} a_n= a \in \mathbb R^{*+}$;
%\item$ B:=  \sum_{n\geq 0} b_n<\infty$;
%\item  the sequence $(nb_n)_{n \geq 0}$ is bounded.
%\end{itemize}
%Then $\lim_{n \to \infty} \sqrt{n}\sum_{0\leq k\leq n}a_k b_{n-k}= a B$.
% \end{lemma}
  
In the general case (when $X_0=x$), we use identity \eqref{partant de x}. By the results of Section \ref{sec:MR} (Theorem \ref{MAIN>} with $\tau^{<-x}$ instead of $\tau^{>r}$), one gets $E_1(x, n)=  O(n^{-3/2})$.\footnote{Notice that in the preceding formula, $O(n^{-3/2})$ depends on $x$.} On the other hand, by the Markov property, since $X_{\bf a}=0$, $\mathbb P$-a.s., one has
\begin{align*}
E_2 (x, n)&= \sum_{0\leq k\leq n}   \mathbb E[ {\bf a}=k; \phi(X_n)|X_0=x]
\\
&=\sum_{0\leq k\leq n}  \mathbb P[{\bf a}=k|X_0=x]
 \mathbb E[\phi(X_{n-k})|X_0=0]
\\
&=\sum_{0\leq k\leq n} \mathbb P[\tau^{<-x}=k]
 \mathbb E[\phi(X_{n-k})|X_0=0].
\end{align*}
Recall that $\lim_{n \to \infty} \sqrt{n} \mathbb E[\phi(X_{n})|X_0=0]= (\widetilde \kappa/ \sqrt{\pi})U^+(\phi)$; on the other hand, it follows from Proposition \ref{SECOND=} (with $\tau^{<-x}$ instead of $\tau^{>r}$) that $(n\mathbb P[\tau^{<-x}=n])_{n \geq 0}$ is bounded. Furthermore, $\sum_{n \geq 1} \mathbb P[\tau^{<-x}=n]=\mathbb P[\tau^{<-x}<\infty]=1$. One may thus apply Lemma \ref{Bruijn2}, which yields
\begin{equation*}
     \lim_{n \to \infty} \sqrt{n} \mathbb E[\phi(X_{n-k})|X_0=x]=\lim_{n \to \infty} \sqrt{n} E_2(x, n)=\frac{\widetilde \kappa}{\sqrt{\pi}} U^+(\phi).
\end{equation*}

\subsection*{The non-centered case}
Hereafter, we assume that hypotheses $\bf M($1$)$, {\bf M($\exp^-$)} and $\bf AA$ hold, and that in addition $\mathbb E[Y_1]>0$. We use the standard relativisation procedure that we now recall: the function 
\begin{equation*}
     \widehat{\mu}(\gamma):= \mathbb E[e^{\gamma Y_1}]
\end{equation*}      
is well defined on $\mathbb R^-$, tends to $\infty$ as $\gamma \to -\infty$, and has derivative $\mathbb E[Y_1]>0$ at $0$. It thus achieves its minimum at a point $\gamma_0<0$, and we have $\rho:= \widehat{\mu}(\gamma_0)\in ]0, 1[$. The measure 
\begin{equation*}
     \widetilde{\mu}({\rm d}x):= (1/ \rho)e^{\gamma_0 x}\mu({\rm d}x)
\end{equation*}
is a probability on $\mathbb R$. Furthermore, if $(\widetilde{Y}_i)_{i\geq 1}$ is a sequence of  i.i.d.\ random variables with law $\widetilde{\mu}$ and $(\widetilde{S}_n)_{n \geq 1}$ is the corresponding random walk on $\mathbb R$ starting from $0$, one gets
 \begin{equation*}
     \mathbb E[\varphi(Y_1, \ldots, Y_n)]= \rho^n\mathbb E[\varphi(\widetilde{Y}_1, \ldots, \widetilde{Y}_n)e^{-\gamma_0 \widetilde{S}_n}]
 \end{equation*}
 for any $n \geq 1$ and any bounded test Borel function $\varphi: \mathbb R^n\to \mathbb R$. Denoting by $\widetilde{\tau}^+$ and $\widetilde{\tau}^{*-}$ the first entrance times of $(\widetilde{S}_n)_{n \geq 1}$ in $\mathbb R^+$ and $\mathbb R^{*-}$, respectively, we may thus write \eqref{partant de 0} as 
\begin{eqnarray*}
\mathbb E[\phi(X_n)|X_0=0] = \rho^n
\sum_{0\leq k\leq n} \mathbb E[\widetilde{\tau}^+>k; e^{-\gamma_0 \widetilde{S}_{k}}] \mathbb E[\widetilde{\tau}^{*-}>n-k; \phi(\widetilde{S}_{n-k})e^{-\gamma_0 \widetilde{S}_{n-k}}],
\end{eqnarray*}
and by Lemma \ref{LP1} the sequence $(({n^{3/2}/ \rho^n})\mathbb E[\phi(X_n)|X_0=x])_{n\geq 0}$ converges to some constant $C(\phi)>0$.

Following the same way, for any $x \geq 0$ one can decompose as above $\mathbb E[\phi(X_n)|X_0=x]$ as $E_1(x, n)+E_2(x, n)$, with 
\begin{align*}
E_1(x, n)&=\rho^n
\mathbb E[\widetilde{\tau}^{<-x}>n; \phi(\widetilde{S}_{n})e^{-\gamma_0 \widetilde{S}_{n-k}}],\\
 E_2(x, n)&=  
 \sum_{0\leq k\leq n} \rho^k 
\mathbb E[\widetilde{\tau}^{<-x}=k; e^{-\gamma_0 \widetilde{S}_{k}}] \mathbb E[\phi(X_n)|X_0=0].
\end{align*}
One concludes using Section \ref{sec:MR} (Theorem \ref{MAIN=} with $\tau^{<-x}$ instead of $\tau^{>r}$) for the behavior of the sequence $(\mathbb E[\widetilde{\tau}^{<-x}=n; e^{-\gamma_0 \widetilde{S}_{n}}])_{n\geq 0}$ and the previous estimation for the behavior of $(\mathbb E[\phi(X_n)|X_0=0] )_{n \geq 0}$.
\end{proof}

\section{Local limit theorems and links with results by Denisov and Wachtel}
\label{sec:DW}
\setcounter{equation}{0}
Hereafter, we shall assume that {\bf AA}$(\mathbb Z)$ holds; in particular, the random walk $(S_n)_{n\geq 0}$ is $\mathbb Z$-valued. Taking $\phi(S_n)= \mathbbm{1}_{\{S_n=i\}}$, Theorem \ref{MAIN>} immediately leads to:
\begin{cor}
\label{cor:llt}
Assume that the hypotheses {\bf AA}$(\mathbb Z)$, {\bf C} and {\bf M($2$)} hold. Then for $i\leq r$,
\begin{equation*}
     \mathbb P[\tau^{>r}>n,S_n = i]= \frac{Z(r,i)}{n^{3/2}}(1+o(1)),\quad n\to\infty,
\end{equation*}
where we have set
\begin{equation}
\label{eq:our_harmonic_function}
     Z(r,i)=\sum_{\max\{i,0\}\leq k\leq r} [a^{-}(i-k)U^{*+}(k)+U^{-}(i-k)a^{*+}(k)].
\end{equation}
\end{cor}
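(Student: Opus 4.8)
The plan is to derive Corollary \ref{cor:llt} as a direct specialization of Theorem \ref{MAIN>}. Since we are in the lattice setting {\bf AA}$(\mathbb Z)$, the natural test function is $\phi(t)=\mathbbm{1}_{\{t=i\}}$ for a fixed integer $i$. This is not continuous with compact support, so strictly speaking Theorem \ref{MAIN>} does not apply verbatim; however, on $\mathbb Z$ the indicator of a single point can be approximated by (or is essentially identified with) a compactly supported continuous function that equals $1$ at $i$ and vanishes at the other integers, and all the measures appearing in the limit ($U^{*+}$, $a^-$, $a^{*+}$, $U^-$) are purely atomic on $\mathbb Z$ under {\bf AA}$(\mathbb Z)$. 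So the first step is to justify that Theorem \ref{MAIN>} remains valid with $\phi=\mathbbm{1}_{\{i\}}$, which is routine in the lattice case.

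Next I would simply evaluate the two double integrals in Theorem \ref{MAIN>} with this choice of $\phi$. With $\phi(x+y)=\mathbbm{1}_{\{x+y=i\}}$, the constraint forces $y=i-x$, so each double integral collapses to a single sum over the integer points $x=k$ in the domain $D_r=[0,r]\times\mathbb R^-$. The condition $(k,i-k)\in D_r$ means $0\le k\le r$ together with $i-k\le 0$, i.e.\ $k\ge i$; combining these gives $\max\{i,0\}\le k\le r$, which is exactly the summation range in \eqref{eq:our_harmonic_function}. The first integral $\iint_{D_r}\phi(x+y)U^{*+}({\rm d}x)a^-({\rm d}y)$ then becomes $\sum_{\max\{i,0\}\le k\le r} U^{*+}(k)\,a^-(i-k)$, and the second integral becomes $\sum_{\max\{i,0\}\le k\le r} a^{*+}(k)\,U^-(i-k)$. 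Adding these yields precisely $Z(r,i)$ as defined.

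Finally, I would note that the constraint $i\le r$ is needed for the sum to be non-empty (otherwise $\max\{i,0\}>r$ and $Z(r,i)=0$, consistent with the fact that $S_n>r$ is excluded on the event $[\tau^{>r}>n]$). The main (and only mild) obstacle is the first step, namely the passage from continuous compactly supported $\phi$ to the point-mass indicator; in the lattice case this is standard since the weak convergence of atomic measures on $\mathbb Z$ implies convergence of individual atoms, but it should be flagged explicitly. Everything else is a mechanical substitution into the formula of Theorem \ref{MAIN>}, so the proof is genuinely ``immediate'' as claimed.
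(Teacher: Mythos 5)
Your proposal is correct and is exactly the paper's own derivation: the authors obtain Corollary \ref{cor:llt} by taking $\phi=\mathbbm{1}_{\{\cdot=i\}}$ in Theorem \ref{MAIN>} and collapsing the double integrals over $D_r$ to the sum over $\max\{i,0\}\leq k\leq r$. Your extra remark about replacing the point indicator by a continuous compactly supported function agreeing with it on $\mathbb Z$ is a legitimate (and easily fixed) point of care that the paper glosses over with the word ``immediately''.
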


It is worth noting that the definition of $a^{-}$ implies that for $y\in\mathbb Z^{*+}$, $a^-(y)=0$, and for $y\in\mathbb Z^{-}$, 
\begin{equation*}
     a^-(y) = \frac{1}{\sigma \sqrt{2\pi}}\sum_{n\geq0}\mathbb E[\tau^{*+}>n; S_n\in [y,0]].
\end{equation*}
Likewise, for $y\in\mathbb Z^{-}$, $a^{*+}(y)=0$, and for $y\in\mathbb Z^{*+}$, 
\begin{equation*}
     a^{*+}(y) = \frac{1}{\sigma \sqrt{2\pi}} \sum_{n\geq0}\mathbb E[\tau^{-}>n; S_n\in ]0,y]].
\end{equation*}%\footnote{\bf Pour $a^{*+}(y)$ c'est bien ça ?}

\begin{rem}
\label{rem-3}
Using these facts and similar remarks for the potentials $U^{*+}$ and $U^{-}$, we obtain that the quantity \eqref{eq:our_harmonic_function} can also be written as a sum of two convolution terms:
\begin{align}
\label{eq:our_harmonic_function_bis}
     Z(r,i)&=\sum_{-\infty< k< r} [a^{-}(i-k)U^{*+}(k)+U^{-}(i-k)a^{*+}(k)]\\
             &=\sum_{-\infty< k< \infty} [a^{-}(i-k)U^{*+}(k)\mathbbm{1}_{\{k\leq r\}}+U^{-}(i-k)a^{*+}(k)\mathbbm{1}_{\{k\leq r\}}].
\label{eq:our_harmonic_function_ter}             
\end{align}     
\end{rem}

In the remaining of this section we compare the local limit theorem of Corollary \ref{cor:llt} with the one in \cite{DW}. All results taken from \cite{DW} make the assumptions that the $(Y_i)_{i\geq 1}$ have moments of order $2+\epsilon$, with $\epsilon>0$. To state the local limit theorem \cite[Theorem 7]{DW}, %we shall need some extra notations.%\footnote{\bf Dans \cite{DW}, il y a l'ambiguite sur la nature du cone. Ce nest pas clair si c'est un ferme ou un ouvert. Par exemple page 18 section 2.4 ils prennent $(0,\infty)$, done ca suggere que le cone est toujours ouvert, tandis que dans leur section quart de plan (example 3 page 5) ils prennent $\mathbb {R^{+}}^2$. Mais ce n'est peut-etre pas crucial. \textcolor{red}{Je choisis de dire que leur cone est ouvert}.} 
we need to introduce the function (see \cite[Section 2.4]{DW} for more details)
\begin{equation}
\label{eq:their_harmonic_function}
     V(x):=-\mathbb E[S_{\tau^{\leq-x}}]=-\mathbb E[S_{\tau^{<-x+1}}].
\end{equation}
This function is positive %\footnote{{\bf Faire un peu attention ici aux $>,\geq ,<,\leq$.}} \footnote{\bf On pourrait detailler un peu ici: par exemple, dire clairement pourquoi la fonction est positive sur tel ou tel domaine.}
on $\mathbb R^{+}$ and is harmonic for the random walk $(S_n)_{n\geq 0}$ killed when reaching $\mathbb R^{-}$; it means that for $x>0$,
\begin{equation*}
     \mathbb E[V (x + Y_1); \tau^{<-x}> 1] = V (x).
\end{equation*} 
%Further, it is proved in \cite{DW} that 
%\begin{equation*}
%     V(x)=x+\lim_{n\to\infty}\mathbb E[S_n; \tau^{<-x}>n].
%\end{equation*}
Define $V'$ as the harmonic function for the random walk with increments $(-Y_i)_{i\geq 1}$ with the same construction as \eqref{eq:their_harmonic_function}. We have the following result:

\begin{theorem}[\cite{DW}]
\label{thm:reformulation-DW}
Assume that the hypotheses {\bf AA}$(\mathbb Z)$, {\bf C} and {\bf M($2+\epsilon$)} hold. Then for $i\leq r$,
\begin{equation*}
     \mathbb P[\tau^{>r}>n,S_n = i]=\frac{1}{\sigma} \sqrt{\frac{2}{\pi}} \frac{V'((r+1)/\sigma)V((r+1-i)/\sigma)}{n^{3/2}}(1+o(1)),\quad n\to\infty.
\end{equation*}
\end{theorem}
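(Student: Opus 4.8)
The plan is to connect the constant $Z(r,i)$ from Corollary \ref{cor:llt}, which we have already established gives the correct $n^{-3/2}$ asymptotics, with the product of harmonic functions $V'((r+1)/\sigma)V((r+1-i)/\sigma)$ appearing in Theorem \ref{thm:reformulation-DW}. Since both Corollary \ref{cor:llt} and the cited theorem describe the asymptotics of the \emph{same} probability $\mathbb P[\tau^{>r}>n, S_n=i]$, the two expressions for the limiting constant must coincide. Thus the heart of the matter is not a fresh probabilistic estimate but an \emph{algebraic identification}
\begin{equation*}
     Z(r,i) = \frac{1}{\sigma} \sqrt{\frac{2}{\pi}}\, V'\!\left(\frac{r+1}{\sigma}\right) V\!\left(\frac{r+1-i}{\sigma}\right),
\end{equation*}
and the cleanest route is to recognise each factor on the right as a renewal-theoretic sum matching one of the convolution terms in \eqref{eq:our_harmonic_function_ter}.

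First I would unfold the definition \eqref{eq:their_harmonic_function} of $V$ and its companion $V'$ into the language of ladder variables used throughout Sections \ref{sec:FR} and \ref{sec:MR}. Using $V(x)=-\mathbb E[S_{\tau^{<-x+1}}]$ together with the standard duality identity $-\mathbb E[S_{\tau^{<-x+1}}] = \sum_{k\geq 0}\mathbb P[\tau^{-}>k,\, S_k\in[0,x-1]]$ (which is exactly the kind of dual identity derived in the sketch of Theorem \ref{prop:classical_result} and recorded in Remark \ref{rem-1}), I would rewrite $V$ as a potential-type sum. The target is to express $\sigma\sqrt{2/\pi}\,V((r+1-i)/\sigma)$ as the restricted potential $U^{*+}$-against-$a^{-}$ convolution and, symmetrically for the reversed walk, to express $V'((r+1)/\sigma)$ in terms of $U^{-}$ and $a^{*+}$. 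Here the renewal function $V$ for the walk killed on $\mathbb R^-$ is the harmonic function attached to $U^{*+}$ (the potential of the ascending ladder heights), while $V'$, built from $(-Y_i)$, corresponds to the descending-side objects $U^{-}$ and $a^{*+}$; this is the dictionary I would make precise.

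With both sides written as lattice sums, I would verify the identity by matching \eqref{eq:our_harmonic_function_ter} term by term. Concretely, I would show that the first convolution $\sum_k a^{-}(i-k)U^{*+}(k)\mathbbm{1}_{\{k\leq r\}}$ equals (up to the normalising factor $1/(\sigma\sqrt{2\pi})$ already folded into the definitions of $a^{-}$ and $a^{*+}$) a product in which one factor is a truncated $U^{*+}$-sum identifiable with $V'$ and the other is an $a^{-}$-sum identifiable with $V$; the second convolution plays the symmetric role. The key structural input is that $V$ and $V'$ are the unique (up to scaling) positive harmonic functions for the killed walk and its reverse, together with the asymptotic normalisation $V(x)\sim x$ as $x\to\infty$ that pins down the constants --- so it suffices to check that $Z(r,i)$, as a function of the pair $(r,i)$, is harmonic in the same sense and carries the same normalisation, after which uniqueness forces the two formulas to agree.

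The main obstacle I anticipate is precisely this constant-matching: both Corollary \ref{cor:llt} and \cite{DW} produce the limiting constant only up to the overall fluctuation-theoretic normalisation, and reconciling the prefactor $\frac{1}{\sigma}\sqrt{2/\pi}$ with the $1/(\sigma\sqrt{2\pi})$ buried in $a^{-}$, $a^{*+}$ requires tracking every $\sqrt{2}$ and $\sqrt{\pi}$ through the duality and renewal identities without slippage, and in particular correctly handling the shift by $1$ (the ``$r+1$'' versus ``$r$'' and the ``$<$'' versus ``$\leq$'' conventions) that distinguishes \eqref{eq:their_harmonic_function} from our strict/weak ladder times. A secondary subtlety is the moment gap: \cite{DW} assumes {\bf M($2+\epsilon$)} whereas Corollary \ref{cor:llt} needs only {\bf M($2$)}, so strictly the identification of constants is justified under the stronger hypothesis and then the equality of the two closed forms, being purely algebraic, persists under {\bf M($2$)} by the results already proved in Section \ref{sec:MR}.
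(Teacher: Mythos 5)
Your proposal inverts the logic of the paper and, in doing so, rests on precisely the step the authors declare they cannot carry out. The paper's own proof of Theorem \ref{thm:reformulation-DW} is a two-line change of variables: it quotes \cite[Theorem 7]{DW} for a lattice walk with unit variance, applies it to $\widetilde S_n = -S_n/\sigma$ (so that the lattice span is $h=1/\sigma$), and checks that the event $[\tau^{>r}>n,\,S_n=i]$ becomes $[x+\widetilde S_n=y,\,\tau^{\leq -x}>n]$ with $x=(r+1)/\sigma$ and $y=(r+1-i)/\sigma$. No contact with $Z(r,i)$ is needed. You instead propose to \emph{derive} the theorem from Corollary \ref{cor:llt} by proving the identity \eqref{eq:two_constants_coincide} directly. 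But that identity is exactly what the paper states as an open problem immediately after \eqref{eq:two_constants_coincide}: the authors obtain it only as a \emph{consequence} of having two independent proofs of the same local limit theorem, and they verify it by hand only for the simple random walk. So the central ingredient of your argument is assumed, not supplied.

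Concretely, the steps you sketch do not close the gap. The quantity $Z(r,i)$ in \eqref{eq:our_harmonic_function_ter} is a \emph{sum of two} truncated convolutions, each of which is itself a sum over $k$ of products; there is no term-by-term matching that turns this into a single product $V'((r+1)/\sigma)\,V((r+1-i)/\sigma)$ --- the factorization, if true, is a global identity, and exhibiting it is the whole difficulty. Your fallback via uniqueness of positive harmonic functions would require proving that $(r,i)\mapsto Z(r,i)$ is harmonic in each variable for the appropriate killed walks \emph{and} separates into a product of a function of $r+1$ and a function of $r+1-i$, together with the normalization $V(x)\sim x$; none of this is established, and the separation of variables is again the open question in disguise. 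Finally, the remark about the moment gap cuts the other way: if your identity held, you would be proving the $\mathbf{M}(2+\epsilon)$ statement under the weaker hypothesis $\mathbf{M}(2)$, which would be a genuine improvement --- a further sign that the argument cannot be as soft as proposed. The correct and intended proof is the direct reformulation of \cite[Theorem 7]{DW}; your route is not wrong in spirit, but as written it is circular relative to what the paper actually knows how to prove.
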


\begin{proof}
Theorem 7 in \cite{DW} states that if $(\widetilde S_n)_{n\geq 0}$ is a random walk on a lattice $h\mathbb Z$ starting from $0$ and with increments $(\widetilde Y_i)_{i\geq 0}$ having a variance equal to $1$, the following local limit theorem holds:
\begin{equation*}
     \mathbb P[x+\widetilde S_n = y,\tau^{\leq-x}>n]=h  \sqrt{\frac{2}{\pi}} \frac{V(x)V'(y)}{n^{3/2}}(1+o(1)),\quad n\to\infty.
\end{equation*}
%with $\varrho=(\sqrt{2/\pi})/4$ and $H_0=1$.\footnote{{\bf Il y a une contradiction entre le memoire d'Oanh et l'article \cite{DW}. Oanh a trouve $\sqrt{2/\pi}$ pour $\varrho$. A qui faire confiance ?}} 
Applying this result to the random walk $(\widetilde S_n)_{n\geq 0}:=(-S_n/\sigma)_{n\geq 0}$, and letting $x:=(r+1)/\sigma$ and $y:=(r+1-i)/\sigma$, we obtain Theorem \ref{thm:reformulation-DW}.
\end{proof}

By Corollary \ref{cor:llt} and Theorem \ref{thm:reformulation-DW}, we must have
\begin{equation}
\label{eq:two_constants_coincide}
     Z(r,i) = \frac{1}{\sigma} \sqrt{\frac{2}{\pi}} V'((r+1)/\sigma)V((r+1-i)/\sigma).
\end{equation}
However:
\begin{question}
It is an open problem to show by a direct computation that \eqref{eq:two_constants_coincide} holds.
\end{question}
To conclude Section \ref{sec:DW}, we prove \eqref{eq:two_constants_coincide} for the simple random walk, with probabilities of transition $\mathbb P[Y_i=-1]=\mathbb P[Y_i=1]=p$ and $\mathbb P[Y_i=0]=1-2p$. In this case the harmonic functions have the simple form $V(x) = V'(x) = x$, and obviously $\sigma=\sqrt{2p}$. We deduce that the constant in Theorem \ref{thm:reformulation-DW} is
\begin{equation}
\label{eq:first_constant}
     \frac{(r+1)(r+1-i)}{2p^{3/2}\sqrt{\pi}}.
\end{equation} 
To compute $Z(r,i)$, we start from the formulation \eqref{eq:our_harmonic_function}, where we assume that $i\geq 0$ (the computation for $i<0$ would be similar). We recall that for the simple random walk one has $U^{*+}(k) =\mathbbm{1}_{\{k\geq 0\}}$ and $U^{-}(k)=\mathbbm{1}_{\{k\leq 0\}}/p$. Then for $k\leq 0$, $a^{-}(k)=(|k|+1)/(p{\sigma}\sqrt{2\pi})$ and for $k\geq 0$, $a^{*+}(k) = k/(\sigma\sqrt{2\pi})$. We deduce that 
\begin{equation*}
     Z(r,i) = \frac{1}{p\sigma\sqrt{2\pi}}\sum_{i\leq k\leq r} [(k-i+1)+k].
\end{equation*}
It is then an easy exercise to show that $Z(r,i)$ equals \eqref{eq:first_constant}.

%Notre idee pour comparer \cite{DW} et notre resultat est de partir de la fonction harmonique $V$ en \eqref{eq:their_harmonic_function} et de l'exprimer avec des potentiels. Pour ca, il semble que le lemme suivant soit particulierement utile :

%\begin{lemma}
%\label{lemma:law_S_tau}
%Assume that the hypotheses {\bf AA}$(\mathbb Z)$, {\bf C} and {\bf M($2$)} hold. For $-y<-x\leq 0$, one has
%\begin{equation}
%     \mathbb P[S_{\tau^{<-x}}=-y] = \sum_{0\leq w\leq x} U^{*-}(-w)\mu^{*-}(w-y).
%\end{equation}
%\end{lemma}
%
%\begin{proof}
%One has
%\begin{align*}
%     \mathbb P[S_{\tau^{<-x}}=-y] &= \sum_{n\geq 1} \mathbb P[\tau^{<-x}=n,S_{n}=-y] \\
%     &  =\sum_{n\geq 1}\sum_{0\leq k\leq n}  \mathbb P[\exists \ell\geq 0,T^{*+}_\ell=k, S_k\geq -x, n-k=\tau^{*+}_{\ell+1}, S_n=-y] 
%\end{align*}
%A FINIR!
%\end{proof}

\section*{Acknowledgments}
\setcounter{equation}{0}
We wish to thank Nguyen Thi Hoang Oanh for useful discussions concerning Section \ref{sec:DW}. We are grateful to Vitali Wachtel for pointing out the reference \cite{D}. Finally, we thank an anonymous referee for useful comments and suggestions.

\end{document}